\def\wtd{\widetilde}
\def\what{\widehat}
\DeclareMathOperator{\diag}{diag}
\DeclareMathOperator{\subspan}{span}
\DeclareMathOperator{\HH}{H}
\DeclareMathOperator{\T}{T}
\def\bb{\pmb{b}}
\def\bc{\pmb{c}}
\def\be{\pmb{e}}
\def\bh{\pmb{h}}
\def\bk{\pmb{k}}
\def\bn{\pmb{n}}
\def\bq{\pmb{q}}
\def\br{\pmb{r}}
\def\bs{\pmb{s}}
\def\bv{\pmb{v}}
\def\bw{\pmb{w}}
\def\bx{\pmb{x}}
\def\by{\pmb{y}}
\def\bz{\pmb{z}}
\def\sss{\scriptscriptstyle}
\newtheorem{theorem}{Theorem}[section]
\newtheorem{lemma}{Lemma}[section]
\theoremstyle{definition}
\newtheorem{remark}{Remark}[section]
\newtheorem{example}{Example}[section]
\numberwithin{equation}{section}
\numberwithin{figure}{section}
\numberwithin{table}{section}
\def\wtd{\widetilde}
\def\what{\widehat}
\def\bb{\pmb{b}}
\def\bc{\pmb{c}}
\def\be{\pmb{e}}
\def\bh{\pmb{h}}
\def\bk{\pmb{k}}
\def\bn{\pmb{n}}
\def\bq{\pmb{q}}
\def\br{\pmb{r}}
\def\bs{\pmb{s}}
\def\bv{\pmb{v}}
\def\bw{\pmb{w}}
\def\bx{\pmb{x}}
\def\by{\pmb{y}}
\def\bz{\pmb{z}}
\def\bzs{\mathbf{0}}
\def\X{\times}
\def\wtd{\widetilde}
\def\what{\widehat}
\def\balp{\boldsymbol{\alpha}}
\def\bphi{\boldsymbol{\varphi}}
\def\bxi{\boldsymbol{\xi}}
\def\bzs{\mathbf{0}}
\def\diag{{\rm diag}}
\def\wtd{\widetilde}
\def\what{\widehat}
\def\bbQ{\mathbb{Q}}
\def\bbP{\mathbb{P}}
\def\bbR{\mathbb{R}}
\def\bbS{\mathbb{S}}
\def\bbV{\mathbb{V}}
\def\bbN{\mathbb{N}}
\def\bbK{\mathbb{K}}
\renewcommand{\algorithmicrequire}{\textbf{Input:}}
\renewcommand{\algorithmicensure}{\textbf{Output:}}
\numberwithin{equation}{section}
\numberwithin{figure}{section}
\numberwithin{table}{section}
\title{Multivariate confluent Vandermonde with $G$-Arnoldi   and applications}
\author{Lei-Hong Zhang\thanks{Corresponding author. School of Mathematical Sciences, Soochow University, Suzhou 215006, Jiangsu, China. This work was
 supported in part by the National Natural Science Foundation of China (NSFC-12471356,  NSFC-12371380), Jiangsu Shuangchuang Project (JSSCTD202209), Academic Degree and Postgraduate Education Reform Project of Jiangsu Province, and China Association of Higher Education under grant 23SX0403. 
        Email: {\tt longzlh@suda.edu.cn}.} \and Ya-Nan Zhang\thanks{School of Mathematical Sciences, Soochow University, Suzhou 215006, Jiangsu, China.  Email: {\tt ynzhang@suda.edu.cn}.} 
\and Linyi Yang\thanks{School of Mathematics and Statistics, Henan University of Science and Technology, Luoyang 471023, Henan, China.  Email: {\tt lyyang@haust.edu.cn}.}\and Yifu Wu\thanks{School of Mathematical Sciences, Soochow University, Suzhou 215006, Jiangsu, China.  Email: {\tt 3121973240@qq.com}. }        }
\date{ }
\begin{document}

\maketitle

\begin{abstract}
In the least-squares fitting framework, the Vandermonde with Arnoldi (V+A) method presented in [Brubeck, Nakatsukasa, and Trefethen,
{\em SIAM Review}, 63 (2021), pp. 405–415] is an effective approach to compute a  polynomial that approximates an underlying univariate function $f$. Extensions of V+A include its multivariate version and the univariate confluent V+A; 
the latter enables us to use the information of the derivative of $f$ in obtaining the approximation polynomial. In this paper, we shall extend V+A further to the multivariate confluent V+A.  
Besides the technical generalization of the univariate confluent V+A, we also introduce a general and application-dependent $G$-orthogonalization in the Arnoldi process. We shall demonstrate with several  applications that, by specifying an application-related $G$-inner product, the desired approximate multivariate polynomial, as well as   certain of its partial derivatives can be computed accurately from a well-conditioned least-squares problem whose coefficient matrix is orthonormal. The desired multivariate polynomial is represented in a discrete   $G$-orthogonal polynomials basis which admits an explicit recurrence, and therefore, facilitates evaluating function values and certain partial derivatives at new nodes efficiently. We demonstrate its flexibility by applying it to solve the multivariate Hermite least-squares problem and PDEs with various boundary conditions in irregular domains.
\end{abstract}


\medskip
{\small
{\bf Key words. Multivariate polynomials, Gram-Schmidt process, Arnoldi process, Confluent Vandermonde, Orthogonalization}    
\medskip

{\bf AMS subject classifications. 41A10, 41A63, 65F20, 65F25, 65M22}
}

 
\section{Introduction}\label{sec_intro}
In the least-squares fitting minimization, the Vandermonde with Arnoldi (V+A) method presented by Brubeck, Nakatsukasa and Trefethen,
in \cite{brnt:2021} is an effective approach to compute an approximation polynomial for an underlying univariate function $f$. It consists of the fitting stage and the evaluation stage. Given samples of an underlying function $f$ at a set ${\mathcal X}$ of nodes, in the fitting stage, a discrete orthogonal polynomials basis is generated by the Arnoldi process from the novel monomial basis, and the  fitting approximation polynomial, represented in the discrete orthogonal polynomials basis, can be accurately computed via a well-conditioned least-squares (LS) problem with an orthonormal coefficient matrix. Due to an explicit recurrence of the discrete orthogonal polynomials basis functions, the evaluation stage enables us to evaluate function values  at new nodes efficiently. 
The V+A method turns out to be very effective in many applications (see e.g., \cite{brnt:2021,brtr:2022,hoka:2020,kikr:2014,natr:2021,yazz:2023,zhyy:2025}), and in the presence of rounding error, the forward error analysis is given in \cite{zhsl:2023}.

Besides the application of V+A to various problems, extensions of V+A have also been proposed. For example, the multivariate version of V+A is discussed in \cite{aukl:2021,hoka:2020,zhna:2023} and the univariate confluent V+A  is proposed in \cite{nizz:2023}. The confluent V+A facilitates us to include additional information of the derivative of $f$ to compute the approximation polynomial, and \cite{nizz:2023} demonstrates various applications of the univariate confluent V+A.   

In this paper, we shall present a multivariate confluent V+A which can use both the function values as well as higher-order partial derivatives of $f$ to generate a multivariate approximation polynomial. 
Besides the technical generalization of the univariate confluent V+A, we also introduce a general and application-dependent $G$-orthogonalization in the Arnoldi process \cite{lirc:2004rept}. The general framework of applying this application-dependent $G$-orthogonalization multivariate confluent V+A to various problems will be presented in Section \ref{subsec:Ginner}; in particular,  we shall  demonstrate numerical results from several applications including the multivariate Hermite least-squares problem and solving PDEs with various boundary conditions in irregular domains. These applications imply that, by specifying an application-related $G$-inner product in the Gram-Schmidt (GS) process within the Arnoldi process, the desired approximate multivariate polynomial as well as its certain partial derivatives can be computed accurately from a well-conditioned least-squares problem whose coefficient matrix is orthonormal. Similar to the univariate case, the desired multivariate polynomial is represented in a discrete   $G$-orthogonal polynomials basis which has an explicit recurrence, and therefore, facilitates the user to evaluate function values and certain partial derivatives at new nodes efficiently. 

We organize the paper as follows:  In Section \ref{sec_MVA}, we discuss a particular ordering, namely, the {\it grevlex} ordering,  of the multivariate monomials, and the recurrence relations for the first-order and second-order partial derivatives of the basis functions. Section \ref{sec_MVA} describes the detailed implementations of the multivariate confluent V+A, where the fitting stage and the evaluation stage in the $G$-orthogonalization are presented. Section \ref{sec_apps} presents a general framework to define $G$-orthogonalization in the multivariate confluent V+A and also provides  applications (including the multivariate Hermite least-squares problem and Poisson equations with the Dirichlet condition and/or Neumann condition); through these applications, we demonstrate that, by choosing an application-dependent $G$-inner product in the Arnoldi process, the least-squares problem for the coefficient vector of the approximation multivariate polynomial in the discrete $G$-orthogonal polynomials basis can be well-conditioned, and hence can be solved accurately. No matrix inverse is required and only matrix-vector multiplications involve in the whole process.  Concluding remarks are drawn in Section \ref{sec_conclusion}.

\vskip 2mm 
\noindent{\bf Notation}. 
\begin{itemize}
	\item Throughout the paper, bold lower case letters are used to represent column vectors,  and ${\mathbb R}^{n\times m}$ stands for the set
	of all $n\times m$  real matrices, with the identity $I_n\equiv [\be_1,\be_2,\dots,\be_n]\in\bbR^{n\times n}$, where $\be_i$ is its $i$th column with $i\in [n]:=\{1,2,\dots,n\}$. For a vector $\bx=[x_{1},\dots,x_{n}]^{\T}\in \bbR^n$, we shall also conventionally use $[\bx]_{i}=x_i$ to represent its $i$th entry $x_i$;  
	$\diag(\bx)=\diag(x_{1},\dots,x_{n})$ is the diagonal matrix, and $\|\bx\|_2$ is the $2$-norm  of $\bx$.  
	For  a matrix $A\in\bbR^{m\times n}$,  ${\rm span}(A)$ and $A^{\HH}$ (resp. $A^{\T}$)  are the column space and the conjugate transpose (resp. transpose)  of $A$, respectively. We also adopt MATLAB-like convention to represent the sub-matrix $A_{{\mathcal I}_1,{\mathcal I}_2}$ of $A$, consisting of intersections of rows and columns indexed by ${\mathcal I}_1\subseteq [m]$ and ${\mathcal I}_2\subseteq [n]$, respectively.
	\item 
	Denote by $\bbN^d$ the set of $d$-dimensional vectors consisting of nonnegative integer elements. 
	For the multivariable $\bx=[x_{1},\dots, x_{d}]^{\T}\in \bbR^d$, we denote the multivariate monomial $\bx^{\balp}$ with the multi-index  $\balp=[[\balp]_1,\dots,[\balp]_d]^{\T}\in \bbN^d$  by
	$\bx^{\balp}=x_{1}^{[\balp]_1}x_{2}^{[\balp]_2}\dots x_{d}^{[\balp]_d}$. For a given set of nodes ${\cal X}=\{\bx_j\}_{j=1}^m \subset\bbR^d$, we use boldface  $\bx_j\in \bbR^{d}$ to denote the $j$-th node, distinguishing it from the $j$-th component  $x_j$ in  $\bx$. 
\end{itemize}


\section{Bases of the multivariate polynomials space}\label{sec_Bases} 

Using the multivariate monomial $\bx^{\balp}$, two common multivariate polynomial spaces are the total degree polynomial space and the maximum degree polynomial space defined by
\begin{equation}\nonumber
	\bbP_n^{d,{\rm tol}}:={\rm span}\left(\left\{\prod_{j=1}^d x_j^{[\balp]_j}\right\}_{|\balp|\le n}\right),~~\bbP_n^{d,{\rm max}}:={\rm span}\left(\left\{\prod_{j=1}^d x_j^{[\balp]_j}\right\}_{[\balp]_j\le n}\right),
\end{equation}
respectively. Note that $\dim(\bbP_n^{d,{\rm tol}})=C_{n+d}^d$ and $\dim(\bbP_n^{d,{\rm max}})=(n+1)^d.$
Similar to the univariate case, the multivariate monomials $\{\bx^{\balp}\}$ is a natural basis for the multivariate polynomials space. However, the price of using this convenient basis is its ill-conditioning that   usually causes trouble in numerical computations. For the univariate polynomials, the resulting basis matrix at a given set of nodes is the well-known Vandermonde matrix, whose condition number can usually increase extremely fast with the degree of the polynomial (see e.g., \cite{beck:2000,gaut:1963,high:1990,li:2006a,li:2005e06,li:2008b,pan:2016}). This is the motivation of the Vandermonde with Arnoldi  (V+A) \cite{brnt:2021} where a new basis (a discrete orthogonal polynomials basis) for the polynomial space is constructed so that the basis functions admit an explicit  recurrence and the resulting basis matrix at the given nodes is orthonormal. The  process of constructing the discrete orthogonal polynomials \cite{defp:2010,reic:1993} basis in V+A is also known as the Stieltjes orthogonalization \cite{gaut:2004}, and for the multivariate case, discrete orthogonal polynomials have also been discussed in, e.g., \cite{bach:2009,xuyu:2004}.  The V+A method has proven effective in many applications (see e.g., \cite{brnt:2021,brtr:2022,hoka:2020,kikr:2014,natr:2021,yazz:2023,zhha:2025,zhyy:2025}) and its extensions now include the confluent Vandermonde with Arnoldi \cite{nizz:2023} and the multivariate V+A \cite{aukl:2021,hoka:2020,zhna:2023}. 

To generalize the confluent V+A to the multivariate case, we consider particularly $\bbP_n^{d,{\rm tol}}$ and assume that the  multivariate monomials are in the  {\it grevlex} ordering:
\begin{equation}\label{eq:grevlex}
	\hspace{0.5mm}
	\framebox{\parbox{12cm}{$\bx^{\boldsymbol{\alpha}}=\prod_{i=1}^dx_i^{[\balp]_i}$ appears before $\bx^{\boldsymbol{\wtd\alpha}}=\prod_{i=1}^dx_i^{[\wtd\balp]_i}$ if 
			\begin{itemize}
				\item 
				either $|\boldsymbol{\alpha}|<|\boldsymbol{\wtd\alpha}|$, or 
				\item $|\boldsymbol{\alpha}|=|\boldsymbol{\wtd\alpha}|$ and $\exists k\ge 1$ so that $[\balp]_{j}=[\wtd\balp]_{j}$ for $1\le j\le k-1$ and $[\balp]_{k}>[\wtd\balp]_{k}$.
			\end{itemize}
		}
	}
\end{equation}
As an example of the case $d=2$ and $n=3$, the multivariate polynomials space $\bbP_3^{2,\rm tol}$  can be formed as 
\[
\bbP_3^{2,\rm tol}=\subspan(1,x_1,x_2,x_1^2,x_1x_2,x_2^2,x_1^3,x_1^2x_2,x_1x_2^2,x_2^3).
\]
To simplify our presentation, for $\bbP_n^{d,\rm tol}$, we let $\{\varphi_i(\bx)\}_{i=1}^g$ be the multivariate monomial  basis in the  {\it grevlex} ordering with 
$$g=\dim(\bbP_n^{d,{\rm tol}})=C_{n+d}^d.$$
Based on such a construction, we know that \cite{hoka:2020} {\it for any $\varphi_i(\bx)$ with $i\ge 2$, we can have the smallest index, namely, $s_i<i$, such that $\exists u_i\in [d]$ satisfying}
\begin{equation}\label{eq:generating}
	\varphi_i({\bx}) = x_{u_i}\varphi_{s_{i}}(\bx).
\end{equation}
\begin{lemma}\label{lem:spanin}
	For the  {\it grevlex} ordering basis   $\{\varphi_i\}_{i=1}^g$ of $\bbP_n^{d,\rm tol}$ and any $i\ge 2$, we have 
	\begin{equation}\label{eq:spanin1}
		x_{u_i}\varphi_{j}\in {\rm span}(\varphi_{1},\dots, \varphi_{i-1}), ~~\forall j\le s_i-1,
	\end{equation}
	where   $s_i$ is the smallest index such that $\exists u_i\in [d]$ satisfying \eqref{eq:generating}.
\end{lemma}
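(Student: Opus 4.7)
The plan is to show that $x_{u_i}\varphi_j$ is itself a multivariate monomial of total degree at most $n$ that appears strictly before $\varphi_i$ in the grevlex ordering \eqref{eq:grevlex}; once this is established, $x_{u_i}\varphi_j$ must coincide with some $\varphi_k$ for $k\le i-1$, and the conclusion $x_{u_i}\varphi_j\in{\rm span}(\varphi_1,\dots,\varphi_{i-1})$ is immediate. Since $j\le s_i-1$, the hypothesis is exactly that $\varphi_j$ precedes $\varphi_{s_i}$ in the grevlex order, so the core task is to transfer this precedence from the pair $(\varphi_j,\varphi_{s_i})$ to the pair $(x_{u_i}\varphi_j,\, x_{u_i}\varphi_{s_i})=(x_{u_i}\varphi_j,\,\varphi_i)$ by inspecting the two clauses of \eqref{eq:grevlex}.

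First I would split by total degree. If $|\varphi_j|<|\varphi_{s_i}|$, then
\[
|x_{u_i}\varphi_j|=|\varphi_j|+1\le|\varphi_{s_i}|=|\varphi_i|-1<|\varphi_i|,
\]
so the first clause of \eqref{eq:grevlex} places $x_{u_i}\varphi_j$ strictly before $\varphi_i$. If instead $|\varphi_j|=|\varphi_{s_i}|$, let $\balp,\balp'\in\bbN^d$ denote the multi-indices of $\varphi_j$ and $\varphi_{s_i}$; by the second clause there is a smallest $k\ge 1$ with $[\balp]_l=[\balp']_l$ for $l<k$ and $[\balp]_k>[\balp']_k$. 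Multiplication by $x_{u_i}$ sends these multi-indices to $\balp+\be_{u_i}$ and $\balp'+\be_{u_i}$; regardless of whether $u_i<k$, $u_i=k$, or $u_i>k$, the same value is added coordinate by coordinate, so the equalities on positions $1,\dots,k-1$ are preserved and the strict inequality at position $k$ is unchanged. Hence the second clause of \eqref{eq:grevlex} again places $x_{u_i}\varphi_j$ before $\varphi_i$.

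In both cases $|x_{u_i}\varphi_j|\le|\varphi_i|\le n$, so $x_{u_i}\varphi_j$ is a genuine basis monomial of $\bbP_n^{d,\rm tol}$ and the preceding argument identifies it as $\varphi_k$ for some $k<i$. I do not foresee a serious obstacle; the proof is essentially a mechanical multi-index comparison. The one step that requires care is the common shift $\balp\mapsto\balp+\be_{u_i}$ in the equal-degree case: one must observe that a uniform additive perturbation of two multi-indices cannot flip either an equality or the first strict inequality encountered from the left. Notably, the minimality of $s_i$ in \eqref{eq:generating} plays no role here; all that is used is that $\varphi_j$ precedes $\varphi_{s_i}$ in grevlex.
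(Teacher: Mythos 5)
Your proof is correct and rests on the same key observation as the paper's: adding $\be_{u_i}$ to both multi-indices preserves the grevlex comparison, so the precedence of $\varphi_j$ over $\varphi_{s_i}$ transfers to $x_{u_i}\varphi_j$ over $\varphi_i$. The only differences are organizational --- you argue directly where the paper argues by contradiction, and you correctly note two points the paper leaves implicit or unused (the degree bound $|x_{u_i}\varphi_j|\le n$ guaranteeing that $x_{u_i}\varphi_j$ is indeed some $\varphi_k$, and the fact that the minimality of $s_i$ is not actually needed) --- so this counts as essentially the same approach.
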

\begin{proof} We prove it by contradiction. 
	Assume there is an $j\le s_i-1$ satisfying  $$x_{u_i}\varphi_{j}\not\in {\rm span}(\varphi_{1},\dots, \varphi_{i-1}).$$ Then the smallest $\ell$ satisfying $x_{u_i}\varphi_{j}=\varphi_{\ell}$ must satisfy $\ell\ge i$. Indeed,   we must have $\ell\ge i+1$ because, otherwise, $j$ is the smallest index  (note $j\le s_i-1$) such that $\exists u_i\in [d]$ satisfying \eqref{eq:generating}, a contradiction.
	Let
	\begin{equation*}
		\varphi_{j}(\bx) = \bx^{\boldsymbol{\alpha}_j} = x_1^{[{\balp}_j]_1}\cdots x_d^{[{\balp}_j]_d}~~{\rm and}~~
		\varphi_{s_i}(\bx) = \bx^{\boldsymbol{\alpha}_{s_i}} = x_1^{[{\balp}_{s_i}]_1}\cdots x_d^{[{\balp}_{s_i}]_d}.
	\end{equation*}
	From $\varphi_{i}(\bx)=x_{u_i}\varphi_{s_i}(\bx)$ and $\varphi_{\ell}(\bx)=x_{u_i}\varphi_{j}(\bx)$, it holds that
	\begin{align*}\label{equ:phi}
		\varphi_{i}(\bx) &= \bx^{\boldsymbol{\alpha}_{i}} = x_1^{[\balp_{s_i}]_1}\cdots x_{u_i}^{[\balp_{s_i}]_{u_i}+1}\cdots x_d^{[\balp_{s_i}]_{d}},\\
		\varphi_{\ell}(\bx) &= \bx^{\boldsymbol{\alpha}_{\ell}} = x_1^{[\balp_{j}]_1}\cdots x_{u_i}^{[\balp_{j}]_{u_i}+1}\cdots x_d^{[\balp_j]_{d}}.
	\end{align*}
	On the other hand, as  $j<s_i$, we know that 
	\begin{itemize}
		\item either   $|\boldsymbol{\alpha}_j|<|\boldsymbol{\alpha}_{s_i}|$, or
		\item  $|\boldsymbol{\alpha}_j|=|\boldsymbol{\alpha}_{s_i}|$ and $[\balp_{j}]_k>[\balp_{s_i}]_k$, where $k$ is the smallest index satisfying   $[\balp_j]_{t}= [\balp_{s_i}]_{t}$ for $1\le t\le k-1$, and   $[\balp_{j}]_k> [\balp_{s_i}]_k$. 
	\end{itemize}
	For the former case $|\boldsymbol{\alpha}_j|<|\boldsymbol{\alpha}_{s_i}|$, we have $|\boldsymbol{\alpha}_{\ell}|=|\boldsymbol{\alpha}_{j}|+1<|\boldsymbol{\alpha}_{s_i}|+1=|\boldsymbol{\alpha}_{i}|$, which by the definition in \eqref{eq:grevlex} implies that $\ell <i$ (that is, $\varphi_{\ell}$ appears before $\varphi_{i}$), a contradiction. For the latter case with
	$|\boldsymbol{\alpha}_j|=|\boldsymbol{\alpha}_{s_i}|$ and   $[\balp_{j}]_k>[\balp_{s_i}]_k$,  it is also easy to  see that $|\boldsymbol{\alpha}_i|=|\boldsymbol{\alpha}_{\ell}|$ and the index $k$ is still the smallest one satisfying $[\balp_\ell]_{t}= [\balp_{i}]_{t}$ for $1\le t\le k-1$, and  $[\balp_\ell]_{k}> [\balp_{i}]_{k}$. Again, by the definition in \eqref{eq:grevlex},  it holds  $\ell<i$, which is a contradiction against $\ell\ge i+1$. This proves \eqref{eq:spanin1}.
\end{proof}
For the univariate case, the relation \eqref{eq:generating} reduces to 
\begin{equation}\nonumber
	\varphi_i(x) = x\varphi_{i-1}(x), \quad \mbox{\it i.e., }~ u_i=1 ~{\rm and}~s_i=i-1. 
\end{equation}
We point out that it is such a relation \eqref{eq:generating} and \eqref{eq:spanin1} that make it possible to use the Arnoldi process to construct the new discrete orthogonal polynomials basis. Moreover,  \eqref{eq:generating} also leads to the following formulae for the partial derivatives of $\varphi_i({\bx})$:
\begin{subequations}\label{eq:generating_derivative}
	\begin{align}\label{eq:generating_derivative1}
		\partial_{j} \varphi_i:&=\frac{\partial \varphi_i}{\partial {x_j}}  = \delta_{u_i,j}  \varphi_{s_i}+x_{u_i}\frac{\partial \varphi_{s_i}}{\partial {x_j}},\quad j\in[d],\\ \label{eq:generating_derivative2}
		\partial_{j,k} \varphi_i:&=\frac{\partial^2\varphi_i}{\partial {x_j}\partial {x_k}}  = \delta_{u_i,j}  \frac{\partial \varphi_{s_i}}{\partial {x_k}} +\delta_{u_i,k} \frac{\partial \varphi_{s_i}}{\partial {x_j}}+x_{u_i}\frac{\partial^2\varphi_{s_i}}{\partial {x_j}\partial {x_k}}, \quad   j,k\in[d],
	\end{align}
\end{subequations}
where $\delta_{i,j}=1$ if $i=j$ and $0$ otherwise. Relations \eqref{eq:generating} and  \eqref{eq:generating_derivative}  will be used for generating new discrete orthogonal polynomials basis. 

Besides the basis $\{\varphi_i(\bx)\}_{i=1}^g$ for $\bbP_n^{d,\rm tol}$, the reason we provide \eqref{eq:generating_derivative1} and \eqref{eq:generating_derivative2}  further  is because in many applications, including the multivariate Hermite least-squares problems and solving PDEs, the first-order and the second-order partial derivatives of the underlying function $f:\bbR^d\rightarrow \bbR$ are available and useful in constructing a polynomial approximant for $f$ in a general (irregular) domain $\Omega\subseteq \bbR^d$.  Approximation theorems for the simultaneous approximating a multivariate function and its partial derivatives by a multivariate polynomial and the corresponding partial derivatives can be found in \cite{babl:2002}. In this case, with the basis $\{\varphi_i\}_{i=1}^g$ for $\bbP_n^{d,\rm tol}$, we consider the linear space 
\begin{equation}\label{eq:confPspace}
	{\rm span}(\bbV_n^{(2)})=:{\rm span}(\varphi_1^{(2)},\dots,\varphi_g^{(2)}):={\rm span}\left(\left[\begin{array}{c}\varphi_1 \\\partial_{1}\varphi_1 \\\vdots \\\partial_{d}\varphi_1  \\\partial_{1,1}\varphi_{1}\\\vdots\\\partial_{d,d}\varphi_{1}\end{array}\right],\dots,\left[\begin{array}{c}\varphi_g \\\partial_{1}\varphi_g \\\vdots \\\partial_{d}\varphi_g  \\\partial_{1,1}\varphi_{g}\\\vdots\\\partial_{d,d}\varphi_{g}\end{array}\right]\right).
\end{equation} 
Let $$\wtd d = 1+d+d(d+1)/2.$$ Based on \eqref{eq:generating_derivative}, we can write the recurrence for the basis functions $\{\varphi_i^{(2)}\}_{i=1}^g$ by 
\begin{equation}\label{eq:recurrence0}
	\varphi_i^{(2)}= X_{u_i}^{(2)}\varphi_{s_i}^{(2)},~~i\ge 2,
\end{equation}
where $s_i$ is the smallest index such that $\exists u_i\in [d]$ satisfying \eqref{eq:generating}, and $X_{u_i}^{(2)}\in \bbR^{\wtd d\times \wtd d}$ is a lower-triangular matrix whose diagonal entries are all $x_{u_i}$. Example \ref{eg:d=3} is an illustration for the case $d=3$. Note that for the univariate case $d=1$, the matrix $X_{u_i}^{(2)}$ reduces to  (\cite{nizz:2023})
$$
X^{(2)}=\left[\begin{array}{ccc}x&  &  \\1 & x &  \\ & 2 & x\end{array}\right],
$$
and furthermore, 
$$
{X}^{(\ell)}=\left[\begin{array}{ccccc}x &  &  &  &  \\ 1& x&  &  &  \\ & 2 & x &  &  \\ &  & \ddots &  \ddots&  \\  &   &  &   \ell  & x\end{array}\right]\in \bbR^{(\ell+1)\times  (\ell+1)}.
$$
\begin{example}\label{eg:d=3}
	For $d=3,~u_i=3,~s_i=2$, we have 
	\begin{align}\nonumber
		\varphi_i^{(2)}&=\left[\begin{array}{c}\varphi_i \\\hline\partial_{1}\varphi_i \\ \partial_{2}\varphi_i\\  \partial_{3}\varphi_i  \\\hline\partial_{1,1}\varphi_{i}\\ \partial_{1,2}\varphi_{i}\\ \partial_{1,3}\varphi_{i}\\ \partial_{2,2}\varphi_{i}\\ \partial_{2,3}\varphi_{i}\\ \partial_{3,3}\varphi_{i}\end{array}\right] 
		= \underbrace{\left[\begin{array}{c|ccc|cccccc}x_3 &   &   &   &   &   &   &  &   &   \\\hline   &  x_3 &   &  &   &  &   &  &   &  \\  &   & x_3&  &   &  &   &  &  &  \\1 &   &  &  x_3&   &   &   &  &  &   \\\hline  &   &   &   & x_3 &   &   &   &   &  \\   &   &   &  &   &x_3  &  &   &   &   \\  &1  &   &   &  &  & x_3&  &  &  \\  &   &   &   &   &   &   & x_3 &   &  \\  &    &1   &  &  &   &   &   &  x_3 &   \\  &   &   &  2 &   &   &   &   &  & x_3\end{array}\right]}_{=X_{u_i}^{(2)}}\left[\begin{array}{c}\varphi_2 \\\hline\partial_{1}\varphi_2 \\ \partial_{2}\varphi_2\\  \partial_{3}\varphi_2  \\\hline\partial_{1,1}\varphi_{2}\\ \partial_{1,2}\varphi_{2}\\ \partial_{1,3}\varphi_{2}\\ \partial_{2,2}\varphi_{2}\\ \partial_{2,3}\varphi_{2}\\ \partial_{3,3}\varphi_{2}\end{array}\right]=X_{u_i}^{(2)}\varphi_{s_i}^{(2)}.
	\end{align}
\end{example} 

 Extension of $\bbV_n^{(2)}$ to $\bbV_n^{(3)}$ (or $\bbV_n^{(k)} ~(k>3)$) is not hard as we can stack the third-order ($k$th-order) partial derivatives accordingly; for notational simplicity, we take particularly ${\rm span}(\bbV_n^{(2)})$ for example by assuming that only partial derivatives up to the second-order can be used to compute a polynomial approximant. 

Note that any function 
\begin{equation}\nonumber
	p^{(2)}(\bx)=\sum_{j=1}^g\what c_j\varphi_j^{(2)}(\bx)\in {\rm span}(\bbV_n^{(2)}),~~\what c_j\in \bbR,
\end{equation}
consists of a multivariate polynomial $p(\bx)= \sum_{j=1}^g\what c_j\varphi_j(\bx) \in\bbP_n^{d,\rm tol}$, its $d$ partial derivatives $\{\partial_{j}p\}_{j=1}^d$ and $d(d+1)/2$ second-order partial derivatives $\{\partial_{j,k}p\}_{1\le j\le k\le d}.$ Moreover, when we have a multivariate polynomial $p\in \bbP_n^{d,\rm tol}$ to approximate a target function $f:\bbR^d\rightarrow \bbR$, the corresponding representation w.r.t.  $\bbV_n^{(2)}$ provides $p$, its first-order and second-order partial derivatives too.  Conversely, in case when we have the information of some partial derivatives of the target function $f$ at a given set of nodes (for example, the Hermite least-squares problems and PDEs to be presented in Section \ref{sec_apps}), a least-squares system based on \eqref{eq:confPspace} can also be constructed to compute the approximant $p\in \bbP_n^{d,\rm tol}$.

Given a set ${\mathcal X}=\{\bx_j\}_{j=1}^m$ of nodes, where $\bx_j\in \Omega\subset \bbR^d$ and $\Omega$ is a general (irregular) domain, our process of the multivariate confluent V+A  is a way to construct a new basis, namely, $\{\xi^{(2)}_j\}_{j=1}^g$, for ${\rm span}(\bbV_n^{(2)})$ so that 
\begin{itemize}
	\item [(G1)] it has an explicit recurrence for  $\xi^{(2)}_i$ using $\{\xi^{(2)}_j\}_{j=1}^{i-1}$, and 
	\item[(G2)] a particular coefficient matrix arising from a specific least-squares (LS) application at nodes ${\mathcal X}=\{\bx_j\}_{j=1}^m$ is well-conditioned (orthonormal). The solution of LS yields an approximant $p^{(2)}(\bx)= \sum_{j=1}^gc_j\xi^{(2)}_j(\bx) \in{\rm span}(\bbV_n^{(2)})$ for the associated application.
\end{itemize}
We shall see in Section \ref{sec_MVA} that the property in (G1) facilitates the evaluations of the function value as well as its partial derivatives of the approximant computed accurately from (G2). 
The way to realize these goals is the change of bases. In principle,  any nonsingular $R\in\bbR^{g\times g}$  can produce a new basis $[\varphi_1^{(2)},\dots,\varphi_g^{(2)}]R$ for ${\rm span}(\bbV_n^2)$ (particularly, an upper triangular $R$ ensures that the new basis for $\bbP_n^{d,\rm tol}$ is also degree-graded). However, such a basis may result in an ill-conditioned LS problem and may not fulfill the goal (G2);  moreover, the explicit use of the $[\varphi_1^{(2)},\dots,\varphi_g^{(2)}]$ is numerically unstable as the multivariate monomials $\{\varphi_j\}_{j=1}^g$ are nearly linearly dependent. In the next section, we will describe the Arnoldi process to achieve these goals. 

\section{Multivariate confluent Vandermonde with $G$-Arnoldi}\label{sec_MVA} 
\subsection{The multivariate confluent Vandermonde matrix}\label{subsec_MVnd}
The change of basis  for ${\rm span}(\bbV_n^{(2)})$ uses the information of the given  nodes ${\mathcal X}=\{\bx_j\}_{j=1}^m$ in  a general (irregular) domain $\Omega$. To begin with, the basis  in \eqref{eq:confPspace} for ${\rm span}(\bbV_n^{(2)})$ gives rise to the following multivariate confluent Vandermonde matrix
\begin{equation}\nonumber
	V^{(2)}:=\left[\begin{array}{cccc}\bphi_1 &\bphi_2 &\dots&\bphi_g\\\hline\partial_{1}\bphi_1&\partial_{1}\bphi_2&\dots&\partial_{1}\bphi_g \\\vdots&\vdots&\vdots &\vdots \\\partial_{d}\bphi_1&\partial_{d}\bphi_2&\dots&\partial_{d}\bphi_g  \\\hline\partial_{1,1}\bphi_{1}&\partial_{1,1}\bphi_{2}&\dots&\partial_{1,1}\bphi_{g}\\\vdots&\vdots&\vdots&\vdots\\\partial_{d,d}\bphi_{1}&\partial_{d,d}\bphi_{2}&\dots&\partial_{d,d}\bphi_{g}\end{array}\right] =: \kbordermatrix{ &\sss g \\
		\sss m & V_0   \\ 
		\sss md &  V_1\\
		\sss md(d+1)/2 &  V_2}\in \bbR^{m\wtd d\times g}, 
\end{equation} 
where 
\begin{subequations}\label{eq:phi012}
	\begin{align}\label{eq:phi0}
		\bphi_j& =[\varphi_j(\bx_1),\dots,\varphi_j(\bx_m)]^{\T}\in \bbR^m,\quad 1\le j\le g,  \\\label{eq:phi1}
		\partial_{k}\bphi_j&=[\partial_{k}\varphi_j(\bx_1),\dots,\partial_{k}\varphi_j(\bx_m)]^{\T}\in \bbR^m,\quad 1\le j\le g,~1\le k\le d,\\\label{eq:phi2}
		\partial_{k,i}\bphi_j&=[\partial_{k,i}\varphi_j(\bx_1),\dots,\partial_{k,i}\varphi_j(\bx_m)]^{\T}\in \bbR^m,\quad 1\le j\le g,~1\le k\le i\le d,
	\end{align}
\end{subequations}
and $\wtd d=1+d+d(d+1)/2$. 

For $\bbR^{m\wtd d}$, let us induce it with a positive semidefinite inner product \cite{golr:1983} by a positive semidefinite matrix $G\in \bbR^{m\wtd d\times m\wtd d}$:
\begin{equation}\label{eq:Ginner}
	\langle \bx, \by \rangle_G=\bx^{\T}G\by,
\end{equation}
and the $G$-norm of a vector  $\bx$ is $\|\bx\|_G=\sqrt{\langle \bx, \bx \rangle_G}.$
We shall see in Section \ref{sec_apps} that a particular application can naturally lead to a corresponding $G$ to achieve the goal (G2), i.e., to construct an orthonormal coefficient matrix for the associated least-squares problem.

\subsection{The QR factorization of  $V^{(2)}$}\label{subsec_QR}
Denote the matrix $V^{(2)}$ by
\begin{equation}\nonumber
	V^{(2)}=[\bv_1,\dots,\bv_g].
\end{equation}
To explicitly indicate that each column $\bv_i$ is indeed the evaluation of the function $\varphi_i$ at nodes ${\mathcal X}$, we also write it as 
\begin{equation}\label{eq:vecfun}
	\bv_i={\varphi_i}^{(2)}({\mathcal X}), ~~1\le i\le g.
\end{equation}
Let $\{\xi^{(2)}_i\}_{i=1}^g$ be the desired new basis to be computed for ${\rm span}(\bbV_n^{(2)})$ that fulfills the goals (G1) and (G2). 
Let $V^{(2)}=Q^{(2)}R$ be  the QR factorization with the $G$-inner product \eqref{eq:Ginner} and $Q^{(2)}=[\bq_1,\dotsm\bq_g]\in \bbR^{m\wtd d\times g}$. A relation we shall establish is  
\begin{equation}\label{eq:Qphi}
	Q^{(2)}:=\left[\begin{array}{cccc}\bxi_1 &\bxi_2 &\dots&\bxi_g\\\hline\partial_{1}\bxi_1&\partial_{1}\bxi_2&\dots&\partial_{1}\bxi_g \\\vdots&\vdots&\vdots &\vdots \\\partial_{d}\bxi_1&\partial_{d}\bxi_2&\dots&\partial_{d}\bxi_g  \\\hline\partial_{1,1}\bxi_{1}&\partial_{1,1}\bxi_{2}&\dots&\partial_{1,1}\bxi_{g}\\\vdots&\vdots&\vdots&\vdots\\\partial_{d,d}\bxi_{1}&\partial_{d,d}\bxi_{2}&\dots&\partial_{d,d}\bxi_{g}\end{array}\right] =: \kbordermatrix{ &\sss g \\
		\sss m & Q_0   \\ 
		\sss md &  Q_1\\
		\sss md(d+1)/2 &  Q_2}\in \bbR^{m\wtd d\times g}, 
\end{equation}
where, analogous to \eqref{eq:phi012}, 
\begin{subequations}\nonumber%
	\begin{align}\nonumber
		\bxi_j&= [\xi_j(\bx_1),\dots,\xi_j(\bx_m)]^{\T}\in \bbR^m,\quad 1\le j\le g,  \\\nonumber
		\partial_{k}\bxi_j&=[\partial_{k}\xi_j(\bx_1),\dots,\partial_{k}\xi_j(\bx_m)]^{\T}\in \bbR^m,\quad 1\le j\le g,~1\le k\le d,\\\nonumber
		\partial_{k,i}\bxi_j&=[\partial_{k,i}\xi_j(\bx_1),\dots,\partial_{k,i}\xi_j(\bx_m)]^{\T}\in \bbR^m,\quad 1\le j\le g,~1\le k\le i\le d,
	\end{align}
\end{subequations}
and $Q^{(2)}\be_j=\xi_j^{(2)}({\mathcal X})$. 
The Arnoldi process to be presented is a way to realize \eqref{eq:Qphi} which does not involve explicit computation of the QR factorization of $V^{(2)}$ but also gives the recurrence of $\{\xi^{(2)}_i\}_{i=1}^g$ specified in (G1). 

To describe the process, we note that the QR  of $V^{(2)}=Q^{(2)}R$ is just the Gram-Schmidt process in the $G$-inner product. Particularly, we first normalize the first column $\bv_1$ of $V^{(2)}$ to have 
$
Q^{(2)}\be_1=\bq_1=\frac{\bv_1}{\|\bv_1\|_G},
$
and for  $i\ge 2$, we have 
\begin{equation}\label{eq:GSj}
	\bq_i=\frac{\bv_i-\sum_{k=1}^{i-1}\bq_k\langle \bv_i,\bq_k \rangle_G}{\left\|\bv_i-\sum_{k=1}^{i-1}\bq_k\langle \bv_i,\bq_k \rangle_G\right\|_G}.
\end{equation}
Because the basis functions $\{\varphi_i\}_{i=1}^g$ for $\bbP_n^{d,\rm tol}$ have the relations \eqref{eq:generating} and \eqref{eq:recurrence0}, the evaluation vector $\bv_i$ of $\varphi_{{i}}$ at nodes ${\mathcal X}$  satisfies  
\begin{equation}\label{eq:vectorrecu}
	\bv_i= {\bf X}_{u_i}^{(2)}\bv_{s_{i}},
\end{equation}
where   $s_i$ is the smallest index such that $\exists u_i\in [d]$ satisfying \eqref{eq:generating}, and the matrix ${\bf X}_{u_i}^{(2)}\in \bbR^{\wtd d m\times \wtd d m}$ is generated from the matrix ${X}_{u_i}^{(2)}$ in \eqref{eq:recurrence0} in the Kronecker product fashion by replacing the variable $x_{u_i}$ with a diagonal matrix $$X_{u_i}=\diag([\bx_{1}]_{{u_i}},\dots,[\bx_{m}]_{u_i})\in \bbR^{m\times m}~~([\bx_j]_{u_i}=\bx_j^{\T}\be_{u_i} \mbox{ is the $u_i$th entry of the node $\bx_j$})$$
and multiplying the constants $0,1,2$ by $I_m$. For example,   ${\bf X}_{u_i}^{(2)}$ in Example \ref{eg:d=3} is 
$$
{\bf X}_{u_i}^{(2)}={\left[\begin{array}{c|ccc|cccccc} X_3 &   &   &   &   &   &   &  &   &   \\\hline   &  X_3&   &  &   &  &   &  &   &  \\  &   & X_3&  &   &  &   &  &  &  \\I_m &   &  &  X_3&   &   &   &  &  &   \\\hline  &   &   &   & X_3 &   &   &   &   &  \\   &   &   &  &   &X_3 &  &   &   &   \\  &I_m  &   &   &  &  & X_3&  &  &  \\  &   &   &   &   &   &   & X_3 &   &  \\  &    &I_m   &  &  &   &   &   &  X_3&   \\  &   &   &  2I_m &   &   &   &   &  & X_3\end{array}\right]}, 
$$
$X_3=\diag([\bx_1]_3,\dots,[\bx_m]_3)\in \bbR^{m\times m}$.  For the univariate case $d=1$ with nodes $\{x_j\}_{j=1}^m\subset \bbR$, it is true that with $X=\diag(x_1,\dots,x_m)\in \bbR^{m\times m}$, 
\begin{equation}\label{eq:d1secondX}
	{\bf X}^{(2)}=\left[\begin{array}{ccc}X &  &  \\ I_m& X &  \\ & 2I_m & X\end{array}\right]\in \bbR^{3m\times 3m}, 
\end{equation}
and furthermore \cite{nizz:2023}, 
$$
{\bf X}^{(\ell)}=\left[\begin{array}{ccccc}X &  &  &  &  \\ I_m& X&  &  &  \\ & 2I_m & X &  &  \\ &  & \ddots &  \ddots&  \\  &   &  &   \ell I_m& X\end{array}\right]\in \bbR^{(\ell+1)m\times  (\ell+1)m}.
$$
Using \eqref{eq:vectorrecu}, the GS process of \eqref{eq:GSj} can be equivalently written as
\begin{equation}\label{eq:GSjb}
	\bq_i=\frac{{\bf X}_{u_i}^{(2)}\bv_{s_{i}}-\sum_{k=1}^{i-1}\bq_k\langle {\bf X}_{u_i}^{(2)}\bv_{s_{i}},\bq_k \rangle_G}{\left\|{\bf X}_{u_i}^{(2)}\bv_{s_{i}}-\sum_{k=1}^{i-1}\bq_k\langle {\bf X}_{u_i}^{(2)}\bv_{s_{i}},\bq_k \rangle_G\right\|_G}.
\end{equation}
Assume that the GS process \eqref{eq:GSjb} proceeds $t$ steps without breakdown (i.e., the denominator is zero) and we have $\{\bq_i\}_{i=1}^t$.

\begin{lemma}\label{lem:Xqsiin}
	Let  $\bq_i$ be computed in \eqref{eq:GSjb}. Then for any $1\le i\le t$ before breakdown (i.e., the denominator of \eqref{eq:GSjb}   is zero), we have 
	\begin{equation}\nonumber
		{\bf X}_{u_i}^{(2)}\bq_j\in {\rm span}(\bq_1,\dots,\bq_{i-1})={\rm span}(\bv_1,\dots,\bv_{i-1}),\quad \forall j\le s_i-1,
	\end{equation}
	where $s_i$ is the smallest index such that $\exists u_i\in [d]$ satisfying \eqref{eq:generating}.
\end{lemma}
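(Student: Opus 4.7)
My plan is to reduce the vector-level inclusion to the function-level inclusion already established in Lemma~\ref{lem:spanin}, using the fact that the block matrix ${\bf X}_{u_i}^{(2)}$ acts on an arbitrary confluent evaluation vector exactly as multiplication by $x_{u_i}$ acts on the underlying function (together with the induced Leibniz rule for its partial derivatives). The first step is to record the standard Gram--Schmidt identity
\[
{\rm span}(\bq_1,\dots,\bq_{i-1})={\rm span}(\bv_1,\dots,\bv_{i-1}),
\]
which holds as soon as the process has not broken down through step $i-1$ and follows at once from \eqref{eq:GSj} (equivalently \eqref{eq:GSjb}). This reduces the claim to showing ${\bf X}_{u_i}^{(2)}\bv_j\in{\rm span}(\bv_1,\dots,\bv_{i-1})$ for every $j\le s_i-1$.

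The main step is then to prove the ``Leibniz identity''
\[
{\bf X}_{u_i}^{(2)}\,\psi^{(2)}({\cal X})\;=\;(x_{u_i}\psi)^{(2)}({\cal X})
\]
for every smooth $\psi:\bbC^d\to\bbC$, where $\psi^{(2)}({\cal X})$ denotes the stacked vector of the values and first- and second-order partial derivatives of $\psi$ at the nodes ${\cal X}$, arranged exactly as in \eqref{eq:Qphi}. This is established by reading off the block structure of ${\bf X}_{u_i}^{(2)}$ (see Example~\ref{eg:d=3}) block by block and matching it against the product-rule identities \eqref{eq:generating_derivative1}--\eqref{eq:generating_derivative2}: the diagonal blocks $X_{u_i}=\diag([\bx_1]_{u_i},\dots,[\bx_m]_{u_i})$ implement pointwise multiplication by $x_{u_i}$, while the off-diagonal $I_m$ and $2I_m$ entries supply exactly the Kronecker-delta correction terms that arise from differentiating $x_{u_i}\psi$. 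Specializing to $\psi=\varphi_j$ then yields ${\bf X}_{u_i}^{(2)}\bv_j=(x_{u_i}\varphi_j)^{(2)}({\cal X})$.

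With this identity in hand, the conclusion is immediate from Lemma~\ref{lem:spanin}: for $j\le s_i-1$ there exist scalars $c_1,\dots,c_{i-1}$ with $x_{u_i}\varphi_j=\sum_{\ell=1}^{i-1}c_\ell\varphi_\ell$, and applying the (linear) evaluation-with-derivatives map $(\cdot)^{(2)}({\cal X})$ to both sides gives ${\bf X}_{u_i}^{(2)}\bv_j=\sum_{\ell=1}^{i-1}c_\ell\bv_\ell\in{\rm span}(\bv_1,\dots,\bv_{i-1})$. The delicate point is the Leibniz identity above: it asserts that ${\bf X}_{u_i}^{(2)}$ implements multiplication by $x_{u_i}$ on \emph{every} confluent evaluation vector, not merely on the specific $\bv_{s_i}$ for which \eqref{eq:vectorrecu} is stated. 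This requires a careful block-by-block verification against the product rule, but it is the step that carries the entire conceptual content of the lemma.
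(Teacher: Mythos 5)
Your proof is correct and follows essentially the same route as the paper's: establish the Gram--Schmidt span identity, reduce the claim to ${\bf X}_{u_i}^{(2)}\bv_j\in{\rm span}(\bv_1,\dots,\bv_{i-1})$, pass to the function level, and invoke Lemma~\ref{lem:spanin}. The only difference is one of explicitness: where the paper says ``by the structure of the functions $\varphi_i^{(2)}$,'' you spell out the underlying Leibniz identity ${\bf X}_{u_i}^{(2)}\psi^{(2)}({\cal X})=(x_{u_i}\psi)^{(2)}({\cal X})$ for arbitrary smooth $\psi$, which is a fair and slightly more careful rendering of the same step.
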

\begin{proof}
	Denote $V_j^{(2)}=[\bv_1,\dots,\bv_j]$ and  $Q_j^{(2)}=[\bq_1,\dots,\bq_j]$. From \eqref{eq:GSj},  
	we know that the GS process of \eqref{eq:GSjb} implies that $V_j^{(2)}=Q_j^{(2)}R_j$, where $R_j$ is a nonsingular upper triangular matrix. Thus, ${\rm span}(\bq_1,\dots,\bq_{i-1})={\rm span}(\bv_1,\dots,\bv_{i-1})$ follows. 
	
	For ${\bf X}_{u_i}^{(2)}\bq_j\in {\rm span}(\bv_1,\dots,\bv_{i-1})$, because $\bq_t\in {\rm span}(\bv_1,\dots,\bv_{i-1})$ for any $t\le i-1$,  it suffices to show ${\bf X}_{u_i}^{(2)}\bv_j\in {\rm span}(\bv_1,\dots,\bv_{i-1})$ for any $j\le s_i-1$. For this claim, since $\bv_{j}=\varphi_{j}^{(2)}({\mathcal X})$ (see \eqref{eq:vecfun}), it suffices to show $X_{u_i}^{(2)}\varphi_{j}^{(2)}\in {\rm span}(\varphi_{1}^{(2)},\dots, \varphi_{i-1}^{(2)})$ where $X_{u_i}^{(2)}\in \bbR^{\wtd d\times \wtd d}$ is given in \eqref{eq:recurrence0}. By the structure of the functions $\varphi_{i}^{(2)}$ in \eqref{eq:confPspace}, we only need to show that 
	\begin{equation}\nonumber
		x_{u_i}\varphi_{j}\in {\rm span}(\varphi_{1},\dots, \varphi_{i-1}),~~\forall j\le s_i-1,
	\end{equation}
	which is true by Lemma \ref{lem:spanin}. 
\end{proof}

It is noticed that the computation of $\bq_i$ in \eqref{eq:GSjb} requires the explicit use of the column $\bv_{s_i}$ in $V^{(2)}$, which should be computationally avoided due to the ill-conditioning of  $V^{(2)}$. The multivariate V+A is a process for this purpose. In particular, it can be shown that the columns of $\{\bq_i\}_{i=1}^t$ from \eqref{eq:GSjb} are mathematically equivalent to $\{\what \bq_i\}_{i=1}^t$ computed recursively by 
\begin{align}\label{eq:GSjc}
	\what \bq_1=\bq_1; ~~\what\bq_i=\frac{{\bf X}_{u_i}^{(2)}\what\bq_{s_{i}}-\sum_{k=1}^{i-1}\what\bq_k\langle {\bf X}_{u_i}^{(2)}\what\bq_{s_{i}},\what\bq_k \rangle_G}{\left\|{\bf X}_{u_i}^{(2)}\what\bq_{s_{i}}-\sum_{k=1}^{i-1}\what\bq_k\langle {\bf X}_{u_i}^{(2)}\what\bq_{s_{i}},\what\bq_k \rangle_G\right\|_G},~~i\ge 2.
\end{align}

\begin{theorem}\label{thm:GSjc}
	Let $\bq_i$ and $\what \bq_i$ be computed by \eqref{eq:GSjb} and \eqref{eq:GSjc}, respectively. Then we have $\bq_i=\what \bq_i$ for any $1\le i\le t$ before breakdown (i.e., the denominator of \eqref{eq:GSjb}  is zero).
\end{theorem}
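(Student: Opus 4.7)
The plan is to proceed by induction on $i$. The base case $i=1$ is immediate since $\what\bq_1=\bq_1$ by definition in \eqref{eq:GSjc}. For the inductive step, assume $\bq_k=\what\bq_k$ for all $k\le i-1$ (in particular for $k=s_i$, since $s_i<i$), and aim to show $\bq_i=\what\bq_i$.

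Comparing \eqref{eq:GSjb} and \eqref{eq:GSjc}, under the induction hypothesis the two Gram--Schmidt steps are identical except that \eqref{eq:GSjb} starts from the vector ${\bf X}_{u_i}^{(2)}\bv_{s_i}$ whereas \eqref{eq:GSjc} starts from ${\bf X}_{u_i}^{(2)}\bq_{s_i}$. So the whole problem reduces to showing that $G$-orthogonalizing these two vectors against $\{\bq_1,\dots,\bq_{i-1}\}$ and normalizing yields the same unit vector. I would do this by invoking the QR decomposition $V_{s_i}^{(2)}=Q_{s_i}^{(2)}R_{s_i}$ (available since $s_i\le i-1\le t-1$), which gives
$$\bv_{s_i}=\sum_{k=1}^{s_i}[R_{s_i}]_{k,s_i}\,\bq_k,\qquad [R_{s_i}]_{s_i,s_i}>0,$$
the positivity coming from the explicit $G$-norm denominator in \eqref{eq:GSj}. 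Multiplying by ${\bf X}_{u_i}^{(2)}$ splits ${\bf X}_{u_i}^{(2)}\bv_{s_i}$ into the leading term $[R_{s_i}]_{s_i,s_i}{\bf X}_{u_i}^{(2)}\bq_{s_i}$ plus a combination of vectors ${\bf X}_{u_i}^{(2)}\bq_k$ with $k\le s_i-1$, each of which lies in ${\rm span}(\bq_1,\dots,\bq_{i-1})$ by Lemma \ref{lem:Xqsiin}.

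By linearity of the $G$-orthogonal projection onto ${\rm span}(\bq_1,\dots,\bq_{i-1})$, the residual of ${\bf X}_{u_i}^{(2)}\bv_{s_i}$ against this span equals $[R_{s_i}]_{s_i,s_i}$ times the residual of ${\bf X}_{u_i}^{(2)}\bq_{s_i}$, since the remaining terms are annihilated by the orthogonal projection. Dividing by the $G$-norm then cancels the \emph{positive real} scalar $[R_{s_i}]_{s_i,s_i}$ exactly, giving the same unit vector. Combined with the induction hypothesis $\bq_{s_i}=\what\bq_{s_i}$, this produces $\bq_i=\what\bq_i$, closing the induction. Note that this argument implicitly needs the residual to be nonzero, which is precisely the no-breakdown assumption.

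The main obstacle, and the subtle point worth emphasizing in the write-up, is the reality and positivity of $[R_{s_i}]_{s_i,s_i}$. If this scalar were merely nonzero complex, then after $G$-norm normalization the two candidates for $\bq_i$ would agree only up to a unimodular phase, and the theorem would fail as stated. The convention that \eqref{eq:GSj} normalizes by the real positive $G$-norm of the Gram--Schmidt residual is exactly what ensures the diagonal entries of $R$ are real and positive, which rescues the argument. Everything else, including the reduction from $\bv_{s_i}$ to $\bq_{s_i}$ via Lemma \ref{lem:Xqsiin}, is a straightforward unpacking of the QR structure.
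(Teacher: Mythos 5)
Your proof is correct and follows essentially the same route as the paper's: induction on $i$, the QR expansion $\bv_{s_i}=Q_{s_i-1}^{(2)}\br+\alpha\bq_{s_i}$, Lemma \ref{lem:Xqsiin} to absorb the lower-indexed terms into ${\rm span}(\bq_1,\dots,\bq_{i-1})$ where the projection annihilates them, and cancellation of the scalar $\alpha=[R_{s_i}]_{s_i,s_i}$ after $G$-normalization. Your explicit observation that this scalar must be real and positive (being a $G$-norm of a Gram--Schmidt residual) for the normalization to cancel it exactly is a point the paper's proof relies on implicitly without comment.
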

\begin{proof}
	We prove the theorem by induction. Let $V_j^{(2)}=[\bv_1,\dots,\bv_j]$,  $Q_j^{(2)}=[\bq_1,\dots,\bq_j]$ and  $\what Q_j^{(2)}=[\what \bq_1,\dots,\what \bq_j]$. Suppose for any $1\le j\le i-1$, we have 
	\begin{equation}\nonumber
		\what Q_j^{(2)}=Q_j^{(2)}, ~V_j^{(2)}=Q_j^{(2)}R_j,
	\end{equation}
	where $R_j\in \bbR^{j\times j}$ is an upper triangular matrix. Let 
	\begin{equation}\label{eq:whatw}
		\bw = {\bf X}_{u_i}^{(2)}\bv_{s_{i}}-\sum_{k=1}^{i-1}\bq_k\langle {\bf X}_{u_i}^{(2)}\bv_{s_{i}},\bq_k \rangle_G~~{\rm and}~~\what \bw={\bf X}_{u_i}^{(2)}\what\bq_{s_{i}}-\sum_{k=1}^{i-1}\what\bq_k\langle {\bf X}_{u_i}^{(2)}\what\bq_{s_{i}},\what\bq_k \rangle_G.
	\end{equation}
	As $\what Q_{i-1}^{(2)}=Q_{i-1}^{(2)}$ and $s_i\le i-1$, we have 
	$$
	\what \bw={\bf X}_{u_i}^{(2)} \bq_{s_{i}}-\sum_{k=1}^{i-1} \bq_k\langle {\bf X}_{u_i}^{(2)} \bq_{s_{i}}, \bq_k \rangle_G.
	$$
	We shall show that $\bw=\alpha\what \bw$ for some $\alpha\in \bbR$. To this end, we use 
	$$\bv_{s_{i}}=V_{s_i}^{(2)}\be_{s_i}=Q_{s_i}^{(2)}R_{s_i}\be_{s_i}=[Q_{s_i-1}^{(2)},\bq_{s_i}]\left[\begin{array}{c}\br \\\alpha\end{array}\right]=Q_{s_i-1}^{(2)}\br+\alpha \bq_{s_i}$$
	to write 
	\begin{equation}\nonumber
		{\bf X}_{u_i}^{(2)}\bv_{s_{i}}={\bf X}_{u_i}^{(2)}Q_{s_i-1}^{(2)}\br+\alpha{\bf X}_{u_i}^{(2)}\bq_{s_i}.
	\end{equation}
	By Lemma \ref{lem:Xqsiin}, one gets that ${\bf X}_{u_i}^{(2)}Q_{s_i-1}^{(2)}\br\in {\rm span}(\bq_1,\dots,\bq_{i-1})$, and we thus can write it as 
	${\bf X}_{u_i}^{(2)}Q_{s_i-1}^{(2)}\br=Q^{(2)}_{i-1}\what \br$ for some $\what \br\in \bbR^{i-1}$. Plugging ${\bf X}_{u_i}^{(2)}\bv_{s_{i}}=Q^{(2)}_{i-1}\what \br+\alpha{\bf X}_{u_i}^{(2)}\bq_{s_i}$ into the formulation of $\bw$ in \eqref{eq:whatw}, we can easily see by $\|\bq_j\|_G=1 ~(j\le i)$ that 
	$$
	\bw=\alpha\left({\bf X}_{u_i}^{(2)} \bq_{s_{i}}-\sum_{k=1}^{i-1} \bq_k\langle {\bf X}_{u_i}^{(2)} \bq_{s_{i}}, \bq_k \rangle_G\right)=\alpha \what \bw.
	$$
	Consequently, 
	$$
	\what\bq_i=\frac{\alpha\bw}{\alpha\|\bw\|_G}=\frac{ \bw}{ \|\bw\|_G}=\bq_i,
	$$ 
	and the proof is completed.
\end{proof}

\subsection{The multivariate confluent Vandermonde with $G$-Arnoldi}\label{subsec:MV+A}
The new implementation given in \eqref{eq:GSjc} leads to the fitting stage of the multivariate confluent Vandermonde with $G$-Arnoldi (MV+G-A). In particular, by Theorem \ref{thm:GSjc}, related to $\{\bq_j\}_{j=1}^t$, if we define a matrix $K_t^{(2)}=[\bk_1,\dots,\bk_t]$ with 
\begin{equation}\label{eq:Kryvec}
	\bk_1 = \bq_1, ~~\bk_i = {\bf X}_{u_i}^{(2)} \bq_{s_i},~~i\ge 2,
\end{equation}
where $s_i$ is the smallest index such that $\exists u_i\in [d]$ satisfying \eqref{eq:generating}, then  \eqref{eq:GSjc}  corresponds to the QR decomposition of $K_t^{(2)}$:
\begin{equation}\label{eq:KQR}
	K_t^{(2)}=Q_t^{(2)}\wtd R,
\end{equation}
where 
$$
\wtd R_{k,i} = \langle \bk_i, \bq_k \rangle_G~~ (k\le i-1),~~
\wtd R_{i,i} = \left\|\bk_i-\sum_{k=1}^{i-1} \bq_k \wtd R_{k,i} \right\|_G,~~1\le i\le t.
$$
The details are presented in Algorithm \ref{alg:MCV+A-F}, which is a generalization of the multivariate V+A in \cite[Algorithm 2.1]{hoka:2020}.

\begin{algorithm}[thb]
\caption{MV+G-A(F): Fitting stage of the Multivariate Confluent Vandermonde with $G$-Arnoldi} \label{alg:MCV+A-F}
\begin{algorithmic}[1]
\renewcommand{\algorithmicrequire}{\textbf{Input:}}
\renewcommand{\algorithmicensure}{\textbf{Output:}}
\REQUIRE ${\cal X}=\{\bx_j\}_{j=1}^m\subset\bbR^d$, and the {\it grevlex} ordering basis   $\{\varphi_i\}_{i=1}^g$ of $\bbP_n^{d,\rm tol}$.
\ENSURE $Q^{(2)}\in\bbR^{m\wtd d\X t},\wtd{R}\in\bbR^{t\X t}$. 
\STATE $Q^{(2)}\leftarrow 0_{m\wtd d\X g},\wtd{R}\leftarrow 0_{g\X g},~t\leftarrow g$
\STATE $[Q]_{:,1}\leftarrow \be=[1,\dots,1]^{\T}\in\bbR^{m\wtd d}$
\STATE $[\wtd{R}]_{1,1}=\|\be\|_G$
\STATE $[Q^{(2)}]_{:,1}\leftarrow [Q^{(2)}]_{:,1}/[\wtd{R}]_{1,1}$
\FOR{$i=2:g$}
\STATE pick the smallest $s_i\in[d]$ such that $\exists u_i\in[d]$ satisfying  $\varphi_i = x_{u_i}\varphi_{s_i}$
\STATE $\bq_{i} \leftarrow  {\bf X}_{u_i}^{(2)}\bq_{s_i}$
\FOR{$r=1,2$}
\STATE $\bs\leftarrow \langle [Q^{(2)}]_{:,1:{i-1}}, \bq_{i}\rangle_G$
\STATE $\bq_{i} \leftarrow \bq_{i}-[Q^{(2)}]_{:,1:i-1}\bs$
\STATE $[\wtd{R}]_{1:i-1,i}\leftarrow [\wtd{R}]_{1:i-1,i}+\bs$
\ENDFOR
\IF{$\|\bq_{i}\|_G=0$}
\STATE $t\leftarrow i-1$ and {\bf breakdown}
\ELSE
\STATE $[\wtd{R}]_{i,i}\leftarrow \|\bq_{i}\|_G$
\STATE $[Q^{(2)}]_{:,i}\leftarrow \bq_{i}/[\wtd{R}]_{i,i}$
\ENDIF
\ENDFOR
\end{algorithmic}
\end{algorithm}

\begin{remark}
	\begin{enumerate}
	\item In Algorithm \ref{alg:MCV+A-F}, we have used   MATLAB-like notation to represent, for example, $Q_{i-1}^{(2)}$ by $[Q^{(2)}]_{:,1:{i-1}}$. Moreover, $\langle [Q^{(2)}]_{:,1:{i-1}}, \bq_{i}\rangle_G$ in step 9 means 
	$$\langle [Q^{(2)}]_{:,1:{i-1}}, \bq_{i}\rangle_G=[\langle \bq_1, \bq_i \rangle_G, \dots, \langle \bq_{i-1}, \bq_i \rangle_G]^{\T}\in \bbR^{i-1}.$$
	\item In practice, we do not need  the explicit form of $ {\bf X}_{u_i}^{(2)}$ in step 7; instead, one should use the relations given in \eqref{eq:generating_derivative}  to compute ${\bf X}_{u_i}^{(2)}\bq_{s_i}$.
	
	\item The loop between steps 8 and 12 uses the re-orthogonalization of GS process. 
	
	\item
	When it comes to the univariate case $d=1$, \eqref{eq:KQR}  reduces to the (second-order derivative) confluent Vandermonde with Arnoldi \cite{nizz:2023}, where ${\bf X}_{u_i}^{(2)}={\bf X}^{(2)}$ is defined by \eqref{eq:d1secondX} and $s_i=i-1$; also when only the first-order derivatives are used, then \eqref{eq:KQR} becomes $K_t^{(1)}=Q_t^{(1)}\wtd R$ which is the confluent Vandermonde with Arnoldi \cite{nizz:2023}; similarly, $K_t^{(0)}=Q_t^{(0)}\wtd R$ is the original V+A \cite{brnt:2021}. For this case with $d=1$ and each $\ell\ge 0$, we have (see e.g., \cite[Theorem 2.1]{zhsl:2023})
	$$
	[\bq_1, {{\bf X}^{(\ell)}\bq_1,\left({\bf X}^{(\ell)}\right)^2\bq_1,\dots,\left({\bf X}^{(\ell)}\right)^{j-1}\bq_1}]=Q_j^{(\ell)}\Pi_j,~~1\le j\le t-1,
	$$
	where $\Pi_j\in \bbR^{j\times j}$ is an upper triangular matrix. The confluent Vandermonde with Arnoldi \cite{nizz:2023}
	provides    (see e.g., \cite[Theorem 2.1]{zhsl:2023}) the following compact matrix form
	\begin{equation}\label{eq:Arnoldid=1}
		{\bf X}^{(\ell)}Q_j^{(\ell)}=Q_{j}^{(\ell)}H_j+\gamma_j \bq_{j+1}\be_j^{\T},
	\end{equation}
	where $H_j$ is the upper Hessenberg matrix arising from the Arnoldi process. Indeed, from \eqref{eq:KQR}, we know that $H_j=\wtd R_{1:j,2:j+1}$ and $\gamma_{j}=\wtd R_{j+1,j+1}$. The relation \cite[Theorem 2.1]{zhsl:2023} between $H_j$ and $\Pi_j$ is 
	$$\Pi_j=[\be_1,H_j\be_1,\dots,H_j^{j-1}\be_1]\in \bbR^{j\times j}.$$ 
	The (confluent) Vandermonde with Arnoldi \cite{brnt:2021,nizz:2023} is a process for \eqref{eq:Arnoldid=1}.
	\item 
	The name of $G$-Arnoldi process in Algorithm \ref{alg:MCV+A-F} is borrowed from \cite{lirc:2004rept}, where the $G$-inner product is introduced for the structural preserving model reductions and the corresponding $G$-Lanczos process is proposed. In \cite{lirc:2004rept}, the matrix $G$ can be indefinite and the associated $G$-inner product is indefinite \cite{golr:1983} too. In our case, $G$ is positive semidefinite and each particular application could lead to its individual $G$ which yields a well-conditioned LS system for computing the coefficient vector $\bc$ in the multivariate polynomial represented in the new basis functions $\{\xi_j^{(2)}\}_{j=1}^g$ for ${\rm span}(\bbV_n^{(2)})$.
	\item For a positive semidefinite matrix $G$, it is possible that the breakdown with $t<g$ occurs. 
	This is the same as the {\it serious breakdown} in the Krylov subspace methods \cite[p. 389]{wilk:1988}.  For the case when $G$ is a diagonal matrix with diagonal entries either 0 or 1 (the associated $G$-Arnoldi process is called a {\it sub-orthogonalized Arnoldi} process), treatments \cite[Section 9]{lirc:2004rept} for the serious breakdown have been discussed.
	
	\item For the computational complexity, by noting that ${\bf X}_{u_i}^{(2)}\bq_{s_i}$ in step 7 of Algorithm \ref{alg:MCV+A-F}  involves only $O(m\wtd d)$, it requires $O(m\wtd d g^2)$ flops.
	\end{enumerate}
\end{remark}

 \subsection{The recurrence for $\{\xi^{(2)}_i\}_{i=1}^g$ and the evaluation stage}\label{subsec_GArnoldi}
Assume $t=g$. In terms of basis functions for ${\rm span}(\bbV_n^{(2)})$, the nonsingular upper triangular matrix $ R$ in $V^{(2)}=Q^{(2)}R$ provides a new basis for ${\rm span}(\bbV_n^{(2)})$, i.e.,
\begin{equation}\nonumber
	\bbQ_n^{(2)}=\bbV_n^{(2)}R^{-1}=:[\xi_1^{(2)},\dots,\xi_g^{(2)}],
\end{equation}
and thus
\begin{equation}\nonumber
	[\xi_1^{(2)}({\mathcal X}),\dots,\xi_g^{(2)}({\mathcal X})]= [\varphi_1^{(2)}({\mathcal X}),\dots,\varphi_g^{(2)}({\mathcal X})]R^{-1}=[\bq_1,\dots,\bq_g]\in \bbR^{m\wtd d\X g},
\end{equation}
where $\varphi_i^{(2)}({\mathcal X})$ (and similarly $\xi_i^{(2)}({\mathcal X})$) for $1\le i\le g$ are defined in \eqref{eq:vecfun}. Note that $\{\xi_j^{(2)}\}_{j=1}^g$ is a  discrete (on ${\mathcal X}$) orthogonal polynomials basis w.r.t.  the $G$-inner product. From the new basis  $\bbQ_n^{(2)}$,  relations \eqref{eq:Kryvec} and \eqref{eq:KQR} also generate  a basis  for ${\rm span}(\bbV_n^{(2)})$:
\begin{equation}\label{eq:basisfK}
	\bbK_n^{(2)}=\bbQ_n^{(2)}\wtd R=:[\kappa_1^{(2)},\dots,\kappa_g^{(2)}]
\end{equation}
satisfying 
$$\kappa_1^{(2)}({\mathcal X})=\bk_1=\bq_1,~\kappa_i^{(2)}({\mathcal X})=\bk_i= {\bf X}_{u_i}^{(2)} \xi^{(2)}_{s_i}({\mathcal X}),~~i\ge 2.$$
The above relation yields a recurrence for $\{\kappa_j^{(2)}\}_{j=1}^g$:
\begin{equation}\nonumber
	\kappa_1^{(2)}(\bx)=\xi_1^{(2)}(\bx),~~ \kappa_i^{(2)}(\bx)=X_{u_i}\xi_{s_i}^{(2)}(\bx),~~i\ge 2,
\end{equation}
where $s_i$ is the smallest index such that $\exists u_i\in [d]$ satisfying \eqref{eq:generating}. Most importantly, the relation  \eqref{eq:GSjc} provides a recurrence for $\{\xi_j^{(2)}\}_{j=1}^g$:
\begin{equation}\label{eq:recuxi}
	\wtd R_{i,i} \xi_i^{(2)}(\bx) =X_{u_i}\xi_{s_i}^{(2)}(\bx)-\sum_{k=1}^{i-1}\wtd R_{k,i}\xi_k^{(2)}(\bx),
\end{equation}
where the upper triangular matrix $\wtd R$ is computed from Algorithm \ref{alg:MCV+A-F}. 
It is this recurrence for the discrete (on ${\mathcal X}$) orthogonal polynomials $\{\xi_j^{(2)}\}_{j=1}^g$ that makes the evaluations of certain approximation polynomial $p^{(2)}\in {\rm span}(\bbV_n^{(2)})$ easy. In fact, assume $p^{(2)}$ is represented  as $p^{(2)}(\bx)= \sum_{j=1}^gc_j\xi^{(2)}_j(\bx) \in{\rm span}(\bbV_n^{(2)})$.
Then the coefficient vector $\bc=[c_1,\dots,c_g]^{\T}\in \bbR^{g}$ can first be computed accurately from a certain well-conditioned LS system by choosing a particular $G$-inner product (this is the goal (G2)); moreover, relying upon \eqref{eq:recuxi}, the resulting $p^{(2)}(\bx)$ can also readily generate evaluations at new nodes ${\mathcal S}=\{\bs_j\}_{j=1}^{\what m}$ (this is the goal (G1)). Particularly, \eqref{eq:basisfK} implies that 
\begin{equation}\nonumber%
	E:=[\xi_1^{(2)}({\mathcal S}),\dots,\xi_g^{(2)}({\mathcal S})]=[\kappa_1^{(2)}({\mathcal S}),\dots,\kappa_g^{(2)}({\mathcal S})]  \wtd R^{-1}\in\bbR^{\what m\wtd d\X g},
\end{equation}
and because we have the recurrence \eqref{eq:recuxi} with the known $\wtd R$, we can compute the  $i$th column of $E$ using the computed $\{\xi_j^{(2)}({\mathcal S})\}_{j=1}^{i-1}$. In particular, from $[\xi_1^{(2)}({\mathcal S}),\dots,\xi_g^{(2)}({\mathcal S})]\wtd R=[\kappa_1^{(2)}({\mathcal S}),\dots,\kappa_g^{(2)}({\mathcal S})]$ and $\kappa_i^{(2)}({\mathcal S})= {\bf S}_{u_i}^{(2)} \xi^{(2)}_{s_i}({\mathcal S})$, it holds that 
$$
\xi_i^{(2)}({\mathcal S})=\frac{1}{[\wtd R]_{i,i}}\left({\bf S}_{u_i}^{(2)}\,\xi_{s_i}^{(2)}({\mathcal S})-\sum_{j=1}^{i-1}\xi_j^{(2)}({\mathcal S}) [\wtd R]_{j,i}\right).
$$
This evaluation process (MV+G-A(E)) for computing $E$ is summarized in Algorithm \ref{alg:MCV+A-E}, where, similar to the matrix ${\bf X}_{u_i}^{(2)}$ on ${\mathcal X}$, ${\bf S}_{u_i}^{(2)}$ is generated from the matrix ${X}_{u_i}^{(2)}$ in \eqref{eq:recurrence0} in the Kronecker product fashion on nodes ${\mathcal S}$.    
Finally, 
\begin{equation}\label{eq:evaluationp}
	p^{(2)}({\mathcal S})=\left[\begin{array}{c}p(\mathcal S) \\\hline\partial_{1}p(\mathcal S)\\\vdots \\\partial_{d}p(\mathcal S)  \\\hline\partial_{1,1}p(\mathcal S)\\\vdots\\\partial_{d,d}p(\mathcal S)\end{array}\right]=E\bc.
\end{equation}
The overall complexity of flops of the evaluation procedure MV+G-A(E) is $O\left(  t^2 \widehat{m} \widetilde{d} \right).$

\begin{algorithm}[H] 
\caption{MV+G-A(E): Evaluation stage of the Multivariate confluent Vandermonde with $G$-Arnoldi} \label{alg:MCV+A-E}
\begin{algorithmic}[1]
\renewcommand{\algorithmicrequire}{\textbf{Input:}}
\renewcommand{\algorithmicensure}{\textbf{Output:}}
\REQUIRE $\{\bs_j\}_{j=1}^{\what{m}}\subset\bbR^d$,   the {\it grevlex} ordering basis   $\{\varphi_i\}_{i=1}^g$ of $\bbP_n^{d,\rm tol}$, and $\wtd{R}\in\bbR^{t\X t}$ from Algorithm \ref{alg:MCV+A-F}.
\ENSURE $E\in\bbR^{\what m\wtd d\X t}$.
\STATE $[E]_{:,1} \leftarrow 1/[\wtd{R}]_{1,1}$
\FOR{$i=2:t$}
\STATE pick the smallest ${s_i}\in [d]$ such that $\exists u_i\in [d]$ satisfying  $\varphi_i = x_{u_i}\varphi_{s_i}$
\STATE $\bw\leftarrow {\bf S}_{u_i}^{(2)}  [E]_{:,s_i}$
\STATE $\bw \leftarrow \bw-[E]_{:,1:i-1}[\wtd{R}]_{1:i-1,i}$
\STATE $[E]_{:,i} \leftarrow \bw/[\wtd{R}]_{i,i}$
\ENDFOR
\end{algorithmic}
\end{algorithm}

\section{Applications of MV+G-A and numerical results}\label{sec_apps} 
In this section, we   first state a general framework to specify the $G$-inner product in MV+G-A, and then provide numerical results of several particular applications. The numerical tests are carried out in MATLAB 2023b  on a 16-inch Macbook Pro with 2.6 GHz Intel Core i7 and 16Gb memory.

\subsection{The application-dependent $G$-inner product}\label{subsec:Ginner}
Given a compact domain $\Omega\subseteq \bbR^d$ and $f:\Omega\rightarrow \bbR^s ~(s\ge 1)$, we consider to find a function $u:\Omega\rightarrow \bbR$ subject to ${\cal L} u=f$ on $\Omega$, or more generally, the following $L_2$-integral minimization 
\begin{equation}\label{eq:contLS}
\min_{u}\|{\cal L} u-f\|_{{L_2(\Omega)}},
\end{equation}
where ${\cal L}$ is a linear (differential) operator involves the function value, and possibly the first-order and second-order partial derivatives. Let  ${\cal X}\subseteq \Omega$ be a set of sampled nodes. We aims at obtaining a multivariate polynomial $p\in \bbP_n^{d,{\rm tol}}$ to approximate the underlying $u$. As stated before, the purpose of MV+G-A is to find  basis functions $\{\xi_j(\bx)\}_{j=1}^g$ of $\bbP_n^{d,{\rm tol}}$ so that $\bc$ in 
\begin{equation}\label{eq:apppoly}
	p(\bx)= \sum_{j=1}^gc_j\xi_j(\bx) \in\bbP_n^{d,\rm tol},~~\bc=[c_1,\dots,c_g]^{\T}
\end{equation}
 can be computed by a well-conditioned discrete LS problem related to \eqref{eq:contLS}. Suppose ${\cal L}$ involves up to the second-order partial derivatives. Associated with ${\cal X}$ and $\{\xi_j(\bx)\}_{j=1}^g$ is the basis matrix $Q^{(2)}$ given in \eqref{eq:Qphi}.  Since ${\cal L}$ is a linear differential operator, there is a matrix ${\bf L}$ so that the spectral discretization of ${\cal L} p$ on ${\cal X}$ can be represented by 
$
{\bf L}Q^{(2)}\bc,
$
i.e., $${\cal L} u\approx {\cal L} p \xlongrightarrow{\text{ discretization on } {\cal X}}   {\bf L}Q^{(2)}\bc.$$ Therefore, we  have the following LS problem
\begin{equation}\label{eq:contLS}
\min_{\bc\in \bbR^g}\|{\bf L}Q^{(2)}\bc-\bb \|_{2}
\end{equation}
where $\bb$ is a vector resulting from discretizing the function $f$ on ${\cal X}$. 
 Let ${\bf A}={\bf L}Q^{(2)}$. MV+G-A obtains $\{\xi_j(\bx)\}_{j=1}^g$ so that 
 $$
 I={\bf A}^{\T}{\bf A}=(Q^{(2)})^{\T}{\bf L}^{\T}{\bf L} Q^{(2)}=:(Q^{(2)})^{\T}G Q^{(2)},
 $$
 which implies that the basis matrix $Q^{(2)}$ is orthonormal w.r.t.  the matrix $G={\bf L}^{\T}{\bf L}.$ In MV+G-A of Algorithm \ref{alg:MCV+A-F}, finding $\{\xi_j(\bx)\}_{j=1}^g$ with ${\bf A}^{\T}{\bf A}=I$ (hence $\bc={\bf A}^{\T}\bb$) can be realized readily by specifying the inner product:
 $$
 \langle \by,\bz\rangle_G=\by^{\T}{\bf L}^{\T}{\bf L}\bz
 $$
 in Steps 9 and 16. In general, as our applications in the next subsection demonstrate, the matrix $G$ is diagonal or block diagonal which hence facilitates the implementation easily. Furthermore, the method avoids matrix inversion, relying solely on matrix-vector multiplications throughout the computation.
  
\subsection{Applications of MV+G-A}\label{subsec:apps}
To best describe our general least-squares model in Section \ref{subsec:Ginner} for particular applications, for the basis functions \eqref{eq:recuxi} resulting from MV+G-A and a set of nodes ${\mathcal   X}=\{\bx_{j}\}_{j=1}^{m}$,  we introduce the notation 
\begin{subequations}\nonumber
	\begin{align} \label{eq:Qnodes1}
		Q^{[0]}({\mathcal   X})&:=\left[\begin{array}{ccc} \xi_1(\bx_1) & \cdots &  \xi_g(\bx_1) \\\vdots & \cdots & \vdots\\ \xi_1(\bx_m) & \cdots &  \xi_g(\bx_m) \end{array}\right]\in \bbR^{m\times g},\\\label{eq:Qnodes2}
		Q_{i}^{[1]}({\mathcal   X})&:=\left[\begin{array}{ccc}\partial_i\xi_1(\bx_1) & \cdots & \partial_i\xi_g(\bx_1) \\\vdots & \cdots & \vdots\\\partial_i\xi_1(\bx_m) & \cdots & \partial_i\xi_g(\bx_m) \end{array}\right]\in \bbR^{m\times g},~~i\in [d],~~{\rm and}\\\label{eq:Qnodes3}
		Q_{i,k}^{[2]}({\mathcal   X})&:=\left[\begin{array}{ccc}\partial_{i,k}\xi_1(\bx_1) & \cdots & \partial_{i,k}\xi_g(\bx_1) \\\vdots & \cdots & \vdots\\\partial_{i,k}\xi_1(\bx_m) & \cdots & \partial_{i,k}\xi_g(\bx_m) \end{array}\right]\in \bbR^{m\times g},~~i,k\in[d].
	\end{align}
\end{subequations}
Thus,   we can write the LS model of \eqref{eq:contLS} as
\begin{equation}\label{eq:generalmodel}
	\min_{\bc\in\bbR^{g}}\left\|{\bf A}\left(Q^{[0]}({\mathcal X}_0), \{Q_i^{[1]}({\mathcal X}_i)\}_{i=1}^d, \{Q_{i,k}^{[2]}({\mathcal X}_{i,k})\}_{1\le i\le k\le d}\right)\bc-\bb\right\|_2,
\end{equation}  
for  the desired multivariate polynomial $p(\bx)\approx u(\bx)$ in \eqref{eq:apppoly}.
Note that the vector $\bb$ contains the available information of the function $f$, and ${\mathcal X}_i\subseteq{\mathcal X},~{\mathcal X}_{i,j}\subseteq{\mathcal X}$ are all subsets of the original nodes  ${\mathcal X}$;
the coefficient matrix 
\begin{equation}\nonumber
	{\bf A}={\bf A}\left(Q^{[0]}({\mathcal X}_0), \{Q_i^{[1]}({\mathcal X}_i)\}_{i=1}^d, \{Q_{i,k}^{[2]}({\mathcal X}_{i,k})\}_{1\le i\le k\le d} \right)={\bf L}Q^{(2)}
\end{equation} 
is orthonormal, which depends on the function values and certain partial derivatives evaluated at particular nodes\footnote{The case ${\mathcal X}_i=\emptyset$ implies that the first-order partial derivative w.r.t $x_i$ is not involved explicitly in the LS problem \eqref{eq:generalmodel}.}.

The choice of $\mathcal{X} \subseteq \Omega$ is closely tied to the sample complexity and point distribution needed for accurate least-squares approximation, playing a pivotal role within the MV+G-A framework. For the special case where $\mathcal{L}u = u$, extensive literature exists on near-optimal sampling strategies for least-squares polynomial approximation (e.g., \cite{adca:2020,adcd:2022,adhu:2020,adsh:2023,codl:2013,comi:2017,zhna:2023}). In particular, \cite{zhna:2023}  demonstrates that for a broad class of domains, MV+G-A (which, when $\mathcal{L}u = u$, reduces to the multivariate V+A method \cite{aukl:2021,hoka:2020,zhna:2023}) achieves near-optimal approximation with $m = O(g^2)$ equally spaced samples, or $m = O(g^2 \log g)$ random samples. Further improvements are possible with weighted approaches: Weighted least squares \cite{comi:2017} and weighted MV+A \cite{zhna:2023}  can reduce sample complexity to $m = O(g \log g)$, while retaining near-optimal accuracy.
While the selection of $\mathcal{X}$ in a given domain $\Omega$ remains a critical research question in the general MV+G-A framework, we do not address it in this work.

The primary objective of the following examples is to illustrate how a particular application, given a set of nodes $\mathcal{X}$ in a domain $\Omega$, determines both the discretization matrix $\mathbf{L}$ and the $G$-inner product within the MV+G-A framework. Notably, all associated coefficient matrices ${\bf A}$ in \eqref{eq:generalmodel} exhibit a numerically favorable condition number of 1.

\subsubsection{The multivariate Hermite least-squares problem and a node-specific orthogonalization}\label{subsec:Hermit}
As our first example to illustrate the framework in Section \ref{subsec:Ginner}, we apply MV+G-A to compute the multivariate Hermite least-squares problem. 
\begin{example}\label{eg:Hermite1}
	{For a simple function \cite{zhna:2023}
		\begin{equation}\label{eq:egf1}
			f(\bx) = \sin(x_1x_2), \quad \bx\in {\mathcal D} = 
			\{ (x_1,x_2) | x_1^{2} + x_2^{2} \leq 1\}
		\end{equation} 
		on the unit disk ${\mathcal D}$, we use the mesh generator {\sf distmesh} \cite{pest:2004} 
		to get the points $\mathcal X $ on the uniform mesh. In Figure \ref{fig:ex4_1_1}, we plot these nodes 
		including $m_0 = 120$ points (in blue) ${\mathcal X}_0=\{\bx_j\}_{j=1}^{m_0}$ inside the domain ${\mathcal D}$ and $m_1= 42$ boundary points (in red) ${\mathcal X}_1=\{\bx_j\}_{j=m_0+1}^{m_0+m_1}$. As a toy example, we provide function values  for ${\mathcal X}_0=\{\bx_j\}_{j=1}^{m_0}$, while provide both function values as well as the associated partial derivatives w.r.t. $x_1$ and $x_2$   for the boundary points  ${\mathcal X}_1=\{\bx_j\}_{j=m_0+1}^{m_0+m_1}$.  Thus, for MV+G-A(F), ${\mathcal X}={\mathcal X}_0\cup {\mathcal X}_1=\{\bx_j\}_{j=1}^m$ where $m=m_0+m_1= 162$. 
		To evaluate the accuracy of the computed polynomial, we choose $\what m=560$ refined nodes in ${\mathcal D}$ in the evaluation stage MV+G-A(E). 
	} 
\end{example} 
\begin{figure}[h!!!]
	\centering
	\epsfig{file = 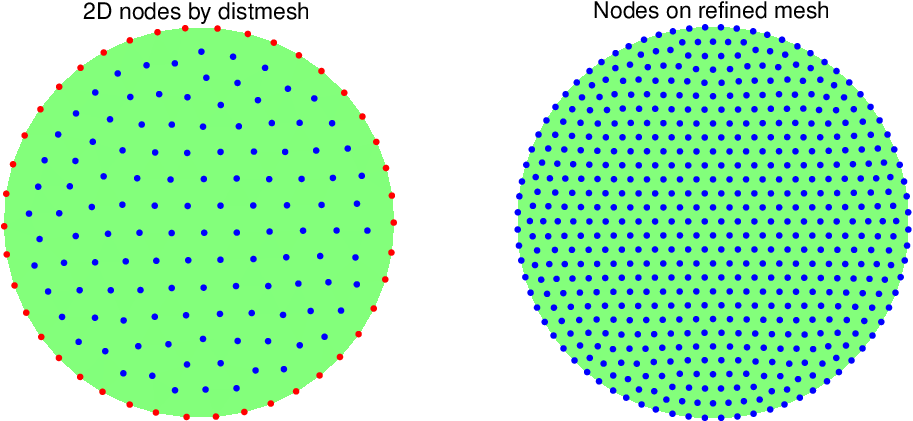, width = 6cm}
	\caption{Uniform mesh on unit disk  by {\sf distmesh.}  (left) fitting nodes including 
		(in red) boundary points ($m_1=42$) and (in blue) interior points ($m_0=120$); (right) 
		nodes for evaluations ($\what m=560$).} 
	\label{fig:ex4_1_1}
\end{figure}

To obtain the coefficient $\bc$ in \eqref{eq:generalmodel}, we have the right-hand side 
\begin{equation}\nonumber
	\bb=[f(\bx_1),\dots,f(\bx_m),\partial_1f(\bx_{m_0+1}),\dots,\partial_1f(\bx_{m}),\partial_2f(\bx_{m_0+1}),\dots,\partial_2f(\bx_{m})]^{\T}\in \bbR^{m+2m_1}.
\end{equation}
Note that the associated linear operator ${\cal L}$ does not involve the second order partial derivatives, and hence $Q^{(1)}=\left[\begin{array}{c}Q_0 \\Q_1\end{array}\right]$ should be used instead of $Q^{(2)}$ \eqref{eq:Qphi} in the model \eqref{eq:contLS}. The discretization matrix ${\bf L}$ is
$$
{\bf L}= \left[\begin{array}{c|cc|cc}I_{m} &  &  &  &  \\ \hline &  {0}_{m_1\times m_0} & I_{m_1} &  &  \\ \hline &  &  &   {0}_{m_1\times m_0}& I_{m_1}\end{array}\right]\in \bbR^{(m+2m_1) \times 3m},
$$
and   the coefficient matrix in the LS \eqref{eq:generalmodel} is 
$$
{\bf A}\left(Q^{[0]}({\mathcal X}_0), \{Q_i^{[1]}({\mathcal X}_1)\}_{i=1}^2\right)={\bf L}Q^{(1)}={\bf L}
\left[\begin{array}{c}
Q^{[0]}({\mathcal X}) \\ \hline Q_1^{[1]}({\mathcal X}_0)\\
	    Q_1^{[1]}({\mathcal X}_1)\\\hline
	      Q_2^{[1]}({\mathcal X}_0)\\
	     Q_2^{[1]}({\mathcal X}_1) \end{array}\right] = \kbordermatrix{ &\sss g \\
	\sss m & Q^{[0]}({\mathcal X})    \\ 
	\sss m_1 &  Q_1^{[1]}({\mathcal X}_1)\\
	\sss m_1 &  Q_2^{[1]}({\mathcal X}_1)}  \in \bbR^{(m+2m_1)\times g},
$$
which is orthonormal if we use the positive semi-definite matrix 
$$
G={\bf L}^{\T}{\bf L}=\diag(\underbrace{I_{m}}_{{\rm for~} p},\underbrace{{0}_{m_0 \X m_0},I_{m_1}}_{{\rm for} ~\partial_{x_1}p},\underbrace{{0}_{m_0\X m_0},I_{m_1}}_{{\rm for} ~\partial_{x_2}p})\in \bbR^{3m\times 3m}
$$
to define the $G$-inner product \eqref{eq:Ginner}. 
Accordingly, for a pair of vectors $\by,\bz\in \bbR^{3m}$, the $G$-inner product is 
\begin{align*}
	\langle \by,\bz\rangle_G&=  \by^{\T}{\bf L}^{\T}{\bf L}\bz= \left[\begin{array}{cc} \by_{1:m}\\   {0}_{m_0}\\\by_{m+m_0+1:2m}\\   \bzs_{m_0}\\\by_{2m+m_0+1:3m}\end{array}\right]^{\T} \left[\begin{array}{cc} \bz_{1:m}\\   \bzs_{m_0}\\\bz_{m+m_0+1:2m}\\   \bzs_{m_0}\\\bz_{2m+m_0+1:3m}\end{array}\right]\\
	&=\by_{1:m}^{\T}\bz_{1:m}+\by_{m+m_0+1:2m}^{\T}\bz_{m+m_0+1:2m}+\by_{2m+m_0+1:3m}^{\T}\bz_{2m+m_0+1:3m}.
\end{align*}
It is worth mentioning that the $G$-Arnoldi process with this $G$-inner product is just the {\it sub-orthogonalization  Arnoldi process} presented in \cite[Section 9]{lirc:2004rept}. In particular, in our case, this sub-orthogonalization  Arnoldi process is a {\it node-specific orthogonalization} in that only nodes with available partial derivatives are involved in the orthogonalization.

Set $n=10$ to have $g=66$. Applying MV+G-A(F), we first compute the 
coefficient vector $\bc$, and then compute via MV+G-A(E) the function values at 
new nodes $\{\bs_j\}_{j=1}^{560}$ (right subfigure in  Figure \ref{fig:ex4_1_1}).  
In Figure \ref{fig:ex4_1_2}, we plot the  approximation $p$ of $f = \sin(\bx)$, 
as well as the errors $f(x_1,x_2)-p(x_1,x_2)$ at the refined new points. 

\begin{figure}[t!!!]
	\centering
	\epsfig{file = 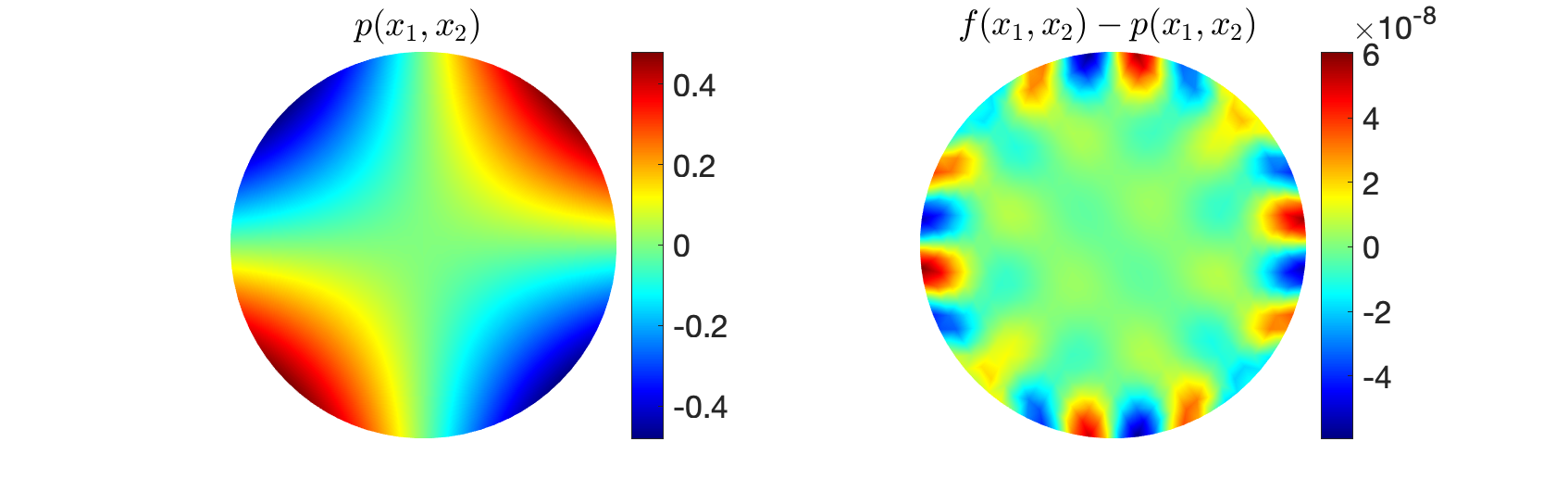,width = 11cm }
	\caption{(left) Hermite LS approximation of $f(\bx)=\sin(x_1x_2)$ in \eqref{eq:egf1}, and  (right) error of the Hermite LS approximation with $n = 10$ and $g = 66$ on  nodes $\{\bs_j\}_{j=1}^{560}$.} 
	\label{fig:ex4_1_2}
\end{figure}

 \begin{example}\label{exam:3d-2}
We next consider a 3-dimensional  Hermite least-squares problem with 
\begin{align*}
   f(x_{1},x_{2},x_{3})  
& =   x_{1}^2 + 2x_{2}^{2} +2x_{3}^{2} + \frac{1}{2}[
\sin(\pi x_{1} ) + \sin(\pi x_{2} ) + \sin(\pi x_{3} ) ] + \sin(x_{1}x_{2}x_{3}), \\
  \quad  \boldsymbol{x}
 \in \Omega&=[-1,1]^3:=
 \{(x_{1},x_{2},x_{3}) \; | \;  | x_{1} | \leq 1,   | x_{2} | \leq 1,   | x_{3} | \leq 1   \}.
\end{align*}  
To form a test problem, in $\Omega$, we randomly choose  $m = 2000$ nodes  ${\cal X}=\{\bx_j\}_{j=1}^m$ (the red points in the left subfigure of Figure \ref{fig:3d-2-1}), and provide values of $f(\bx_j)$,   $\partial_{{1}}f(\bx_j)$ and  $\partial_{{3}}f(\bx_j)$ for $\bx_j\in {\cal X}$. The Hermite least-squares problem then can be reformulated as \eqref{eq:contLS} with 
\begin{equation}\nonumber
	\bb=[f(\bx_1),\dots,f(\bx_m),\partial_1f(\bx_{1}),\dots,\partial_1f(\bx_{m}),\partial_3f(\bx_{1}),\dots,\partial_3f(\bx_{m})]^{\T}\in \bbR^{3m},
\end{equation}
$$
{\bf L}= \left[\begin{array}{c|cc|c|c}I_{m} &  &  &  &  \\ \hline &  {I}_{m} &   &  &  \\ \hline &  &  &   {0}_{m \times m }& I_{m}\end{array}\right]\in \bbR^{3m \times 4m}, ~{\bf A}= {\bf L}Q^{(1)} = \kbordermatrix{ &\sss g \\
	\sss m & Q^{[0]}({\mathcal X})    \\ 
	\sss m  &  Q_1^{[1]}({\mathcal X})\\
	\sss m &  Q_3^{[1]}({\mathcal X})}  \in \bbR^{3m\times g}.
$$
Associated  with $G={\bf L}^{\T}{\bf L}=\diag(I_m,I_m,0_{m\times m},I_m)$ is the  $G$-inner product for  $\by=[\by_1^{\T},\dots,\by_4^{\T}]^{\T},\bz=[\bz_1^{\T},\dots,\bz_4^{\T}]^{\T}\in \bbR^{4m}$   given by
$$\langle \by,\bz\rangle_G =  \by^{\T}{\bf L}^{\T}{\bf L}\bz=\by_1^{\T}\bz_1+\by_2^{\T}\bz_2+\by_4^{\T}\bz_4,~\by_i,\bz_i\in \bbR^m, ~1\le i\le 4.$$
Applying  MV+G-A with $n=13$, we plot the isosurface  of Hermite least-squares  approximation 
 $ p(x_1,x_2,x_3) = 0.5$ in the middle subfigure of Figure \ref{fig:3d-2-1}; the right subfigure in Figure \ref{fig:3d-2-1} displays the approximation errors $f(x_1,x_2,x_3)-p(x_1,x_2,x_3)$ computed at $32^3 = 32768$ uniform grid points, with the point indices shown along the $x$-axis.
\begin{figure}[htbp]
\centering
\hskip -9mm
\epsfig{file = 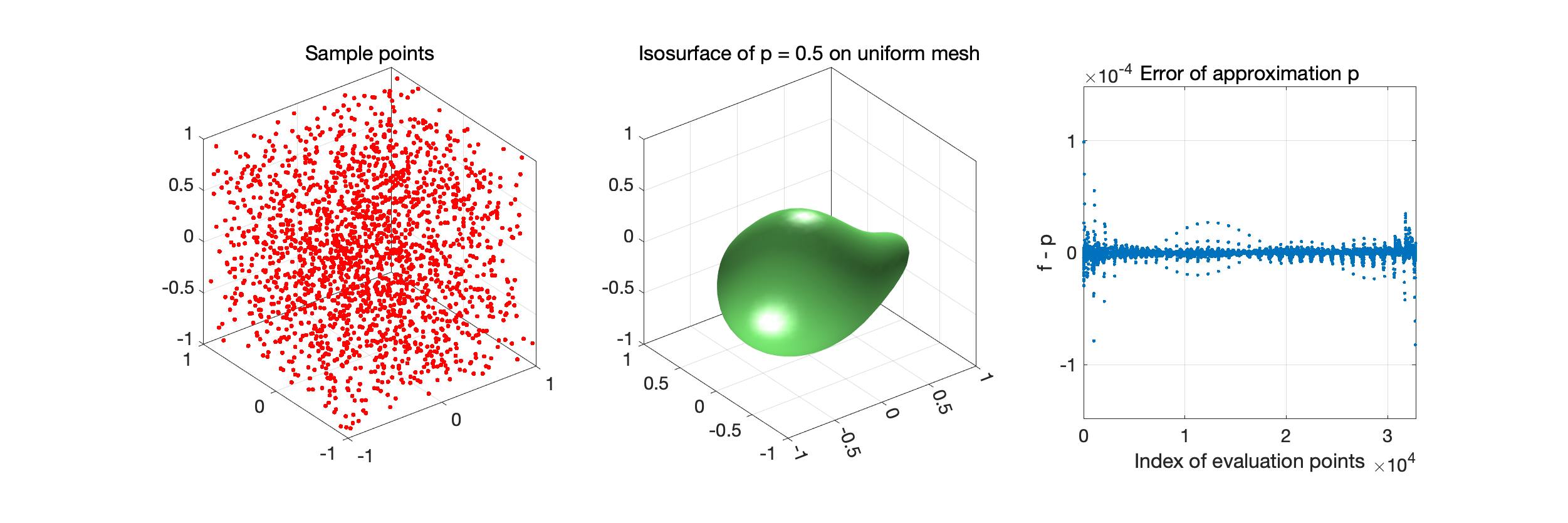, width = 16cm}
\caption{(left) random sample points in $\Omega=[-1,1]^{3}$; (middle)  isosurface of Hermite least-squares  approximation 
 $ p(x_1,x_2,x_3) = 0.5$ with $n=13$;   (right) errors $f(x_1,x_2,x_3) - p(x_1,x_2,x_3)$ at $32^3 = 32768$ uniformly sampled nodes (indexed on the $x$-axis).
 }
\label{fig:3d-2-1}
\end{figure}
\end{example}

\subsubsection{Function values to  divergence and Laplace approximations}\label{subsec:graddiv}
As illustrated in \cite[Example 4]{nizz:2023} for the univariate case, by constructing a polynomial approximation $p$ using only the function values,  one can apply V+A to approximate the values of (high-order) derivatives of the underlying function $f$. This applies similarly to MV+G-A, which is useful for the situation when only values of the   function or its certain partial derivatives at a discrete set of nodes are available.
  
\begin{example}\label{eg:graddiv1}
	{For a given $f(\bx)=e^{-3(x_1^{2} + x_1x_2+ x_2^{2})}$, we first compute the 2D interpolation $p(\bx)$ on the Padua points \cite{camv:2005} ${\mathcal X}=\{\bx_j\}_{j=1}^m$ with $m=561$. These nodes are plotted in Figure \ref{fig:ex4_2_1}. To apply MV + G-A(F) in this case, corresponding to the discrete matrix ${\bf L}=[I_m,{0}_{m},{0}_{m}, {0}_{m},{0}_{m},{0}_{m}]\in \bbR^{m\times 6m}$, we use the inner-product induced by a matrix $G=\diag(I_m,{0}_{m}, {0}_{m},{0}_{m},{0}_{m},{0}_{m})\in \bbR^{6m\times 6m}$ where $0_m=0_{m\X m}$.  In other words, the fitting stage for computing the coefficient vector $\bc$ in \eqref{eq:generalmodel} is just the multivariate V+A \cite{aukl:2021,hoka:2020,zhna:2023} for which the LS orthonormal coefficient matrix is
		$$
		{\bf A}\left(Q^{[0]}({\mathcal X})\}\right)={\bf L}Q^{(2)}= Q^{[0]}({\mathcal X}) \in \bbR^{m\times g}.
		$$
		After computing $\bc$, the evaluations of the partial derivatives of $\partial_{1} p(\bx),~ \partial_{2} p(\bx),~  \partial_{1,1} p(\bx)$ and $\partial_{2,2} p(\bx)$ at new nodes $\{\bs_j\}_{j=1}^{\what m}$ ($\what m=41^2=1681$) are computed via \eqref{eq:evaluationp}.  In this test, we set $n = 32$ to have $g=561$ which is equal to the number $m$ of Padua points.
		Figure \ref{fig:ex4_2_2} shows the errors of the interpolant $p$,  the divergence  
		$\mathop{\rm div} p$, and the Laplace $\Delta p=\partial_{1,1}p(\bx)+\partial_{2,2} p(\bx).$
	}
	\begin{figure}[htbp]
		\centering
		\epsfig{file = 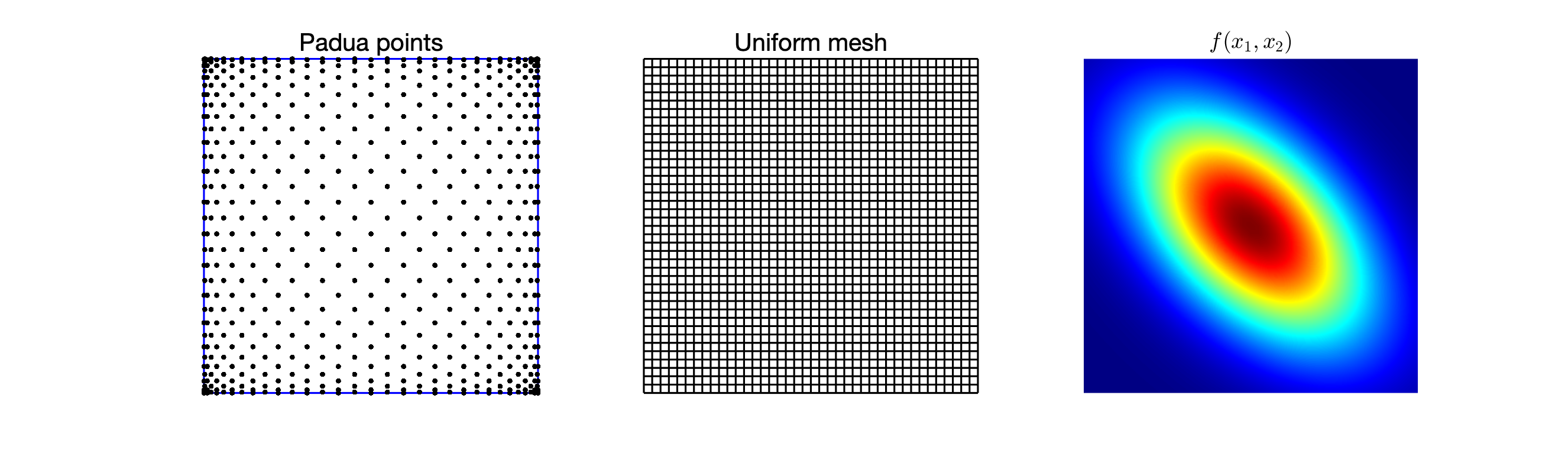,width = 12cm}
		\caption{ (left) fitting nodes (Padua points);  (middle) evaluations at uniform nodes; (right) test function $f(\bx)=e^{-3(x_1^{2} + x_1x_2+ x_2^{2})}$.} \label{fig:ex4_2_1}
	\end{figure}
	
	\begin{figure}[htbp]
		\centering
		\epsfig{file = 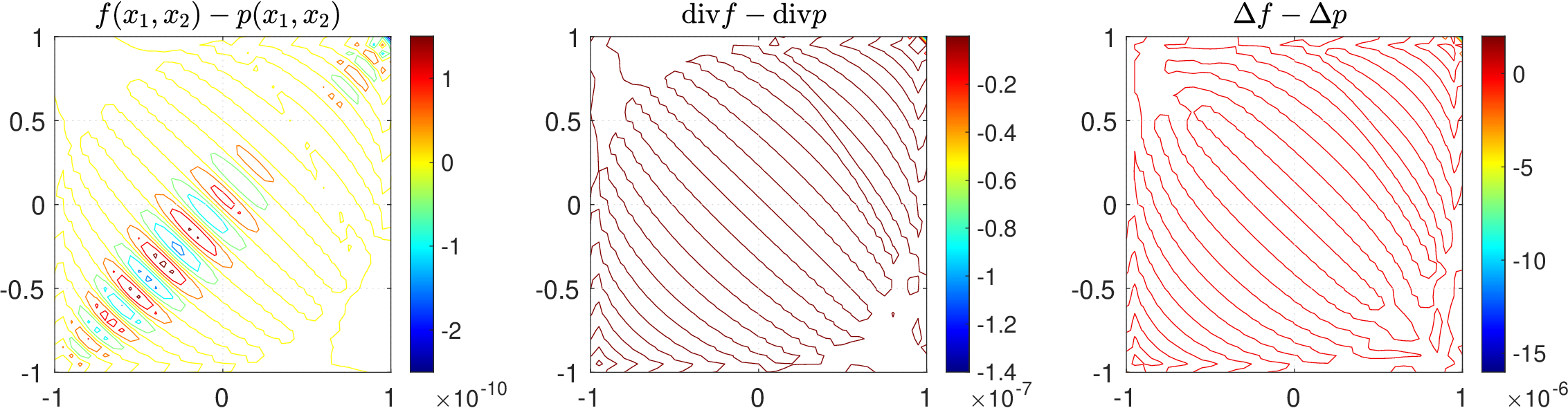,width = 12cm }
		\caption{ (left) interpolant of $f(\bx)=e^{-3(x_1^{2} + x_1x_2+ x_2^{2})}$ on 
			Padua points; (middle) error of $\mathop{\rm div} p(\bx)$ on new points  $\{\bs_j\}_{j=1}^{1681}$; 
			(right) error of $\Delta p(\bx)$ on new points $\{\bs_j\}_{j=1}^{1681}$.} 
		\label{fig:ex4_2_2}
	\end{figure}
\end{example}

\begin{example}\label{exam:3d-1}
We   consider a three-variable function defined on the unit sphere:
\[ f(x_{1},x_{2},x_{3}) = 5x_{1}x_{2}x_{3} \cos(x_{1}-x_{2}+2x_{3}), \quad  \boldsymbol{x}
 \in \Omega=\bbS^{2}:=
\{(x_{1},x_{2},x_{3}) \; | \; x_{1}^{2} + x_{2}^{2} + x_{3}^{3} = 1\}. \]
Choosing $m = 1000$ random  nodes ${\cal X}=\{\bx_j\}_{j=1}^m$ (red nodes in the left subfigure of Figure \ref{fig:ex4_4_2}), and providing values  $f(\bx_j)$ and $\partial_{{2}}f(\bx_j)$ for  $\bx_j\in {\cal X}$, we obtain an approximation polynomial $p(x_1,x_2,x_3)$ with $n=13$ of $f(x_{1},x_{2},x_{3})$ from solving  \eqref{eq:contLS} where 
\begin{equation}\nonumber
	\bb=[f(\bx_1),\dots,f(\bx_m),\partial_2f(\bx_{1}),\dots,\partial_2f(\bx_{m})]^{\T}\in \bbR^{2m},
\end{equation}
$$
{\bf L}= \left[\begin{array}{cccc }I_{m} & 0_m & 0_m & 0_m\\0_m  & 0_m  &    {I}_{m} &0_m  \end{array}\right]\in \bbR^{2m \times 4m}, ~{\bf A}= {\bf L}Q^{(1)} = \kbordermatrix{ &\sss g \\
	\sss m & Q^{[0]}({\mathcal X})    \\ 
	\sss m  &  Q_2^{[1]}({\mathcal X})
 }  \in \bbR^{2m\times g}.
$$
The matrix   $G={\bf L}^{\T}{\bf L}=\diag(I_m,0_{m},I_m,0_{m})\in \bbR^{4m\times 4m}$ and for $\by=[\by_1^{\T},\dots,\by_4^{\T}]^{\T},\bz=[\bz_1^{\T},\dots,\bz_4^{\T}]^{\T}\in \bbR^{4m}$, 
$$\langle \by,\bz\rangle_G =  \by^{\T}{\bf L}^{\T}{\bf L}\bz=\by_1^{\T}\bz_1+\by_3^{\T}\bz_3,~\by_i,\bz_i\in \bbR^m, ~1\le i\le 4.$$
After computing $p$, we  then apply \eqref{eq:evaluationp} and MV+G-A(E) of Algorithm \ref{alg:MCV+A-E} to evaluate the partial derivatives of $\partial_{1} p(\bx),\partial_{2} p(\bx), \partial_{3} p(\bx)$ and compute the rotational surface gradient\footnote{For a differentiable function $p: \bbR^3 \rightarrow \bbR$ and its restriction $p_{|_{\bbS^2}}:\bbS^2\rightarrow \bbR$ on $\bbS^2$, the rotational surface gradient at $\bx\in \bbS^2$  is given by 
$\bn \times {\rm grad}\,p_{|_{\bbS^2}}\in T_{\bx} \bbS^2$, 
where
$T_{\bx} \bbS^2$ is the tangent space of $\bbS^2$ at $\bx \in \bbS^2$, $\bn = \bn(\bx)$ is the unit normal vector at $\bx$ and
${\rm grad}\,p_{|_{\bbS^2}}$ is the gradient of  $p_{|_{\bbS^2}}:\bbS^2\rightarrow \bbR$. Noting that 
$ {\rm grad}\,p_{|_{\bbS^2}} = (I_3 - \bn \bn^{\T}) \nabla p,$  we have  
$\bn \times {\rm grad}\,p_{|_{\bbS^2}} = \bn \times \nabla p - \underbrace{(\bn \times \bn)}_{=\bzs}(\bn^{\T} \nabla p) = \bn \times \nabla p,$
yielding  \eqref{eq:curlp}.
See also the function {\tt curl} in {\tt spherefun} of Chebfun package \url{https://www.chebfun.org}.}:

\begin{equation}\label{eq:curlp}
 {\bn} \times  \nabla p  = 
 \begin{vmatrix} 
{\tt i} & {\tt j}  & {\tt k} \\
 x_{1} & x_{2} & x_{3}
 \\
 \partial_{{1}}p & \partial_{{2}}p & \partial_{{3}}p
\end{vmatrix},~ \bn=[x_1,x_2,x_3]^{\T}\in \bbS^2
\end{equation}
at new nodes $\{\bs_j\}_{j=1}^{1806}$ obtained from the  uniform triangular mesh (mesh points in the right subfigure of Figure \ref{fig:ex4_4_2}). The middle subfigure in Figure \ref{fig:ex4_4_2} depicts the function values $p(\bs_j)$ using a color scale, alongside the vector field $\bn \times \nabla p$ evaluated at the new nodal points $\{ \bs_j \}_{j=1}^{1806}$. The right subfigure illustrates the corresponding errors $f(x_1,x_2,x_3)-p(x_1,x_2,x_3)$, also represented in a color scale.

 \end{example}
\begin{figure}[t!!!]
	\centering
	\hskip  -9mm
	\epsfig{file = 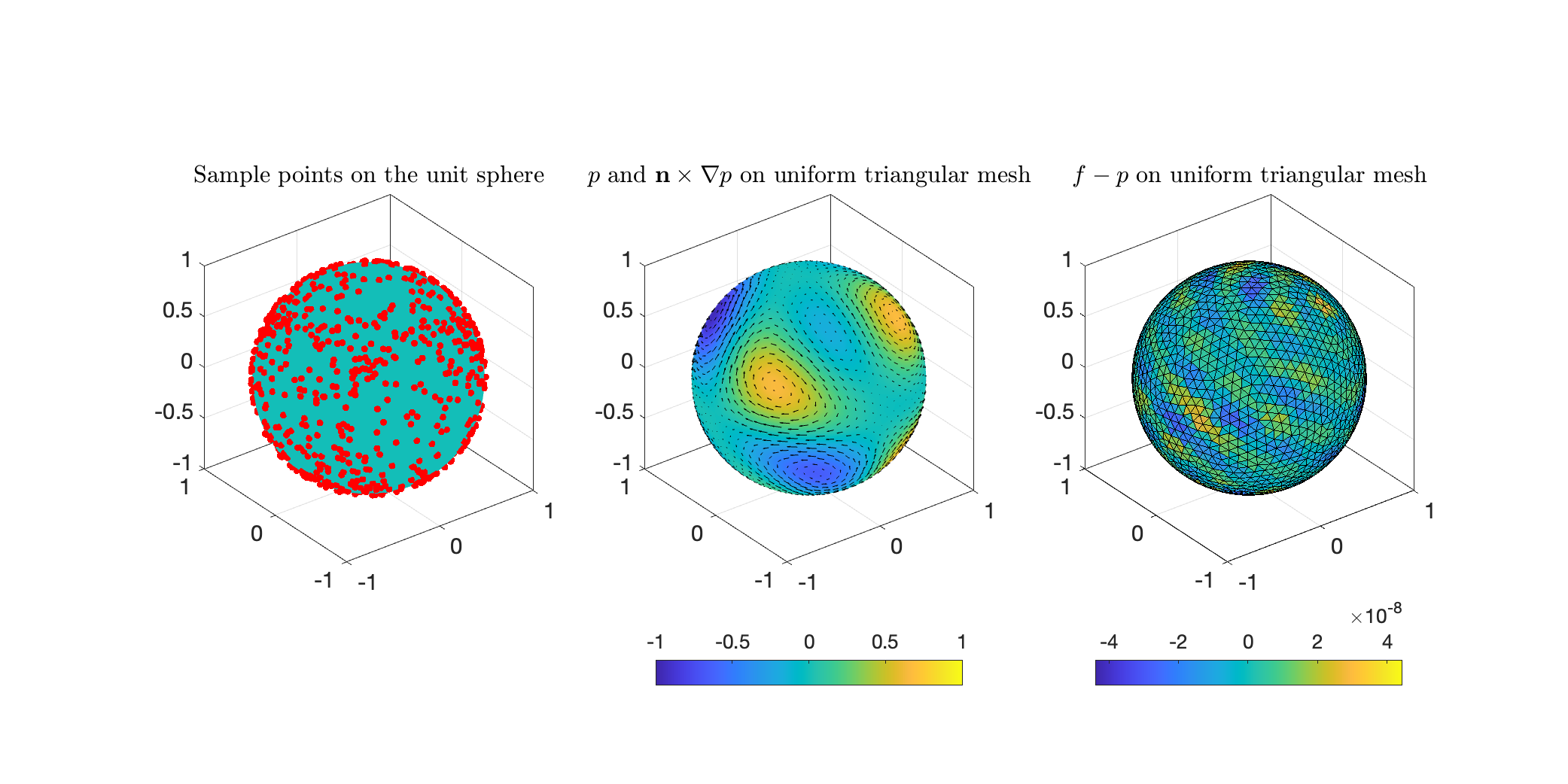,width = 16cm }
	\caption{(left) random sample points on the unit sphere $\bbS^2$; (middle)  evaluations of ${\bn}\times \nabla p$ at new nodes  $\{  \bs_j\}_{j=1}^{ 1806}$;  function values of $p(\bs_j)$ are displayed in a color scale;  (right) error of  Hermite LS approximation $p$ with $n = 13$.} 
	\label{fig:ex4_4_2}
\end{figure}

\subsubsection{Solving PDEs and PDE-specific orthogonalization}\label{subsec:PDE}
Next, we apply the MV+G-A framework to several PDE problems. While numerous efficient methods exist for various types of PDEs---particularly those defined on regular domains---MV+G-A may not be a very computationally competitive option. However, our primary objective is to demonstrate that MV+G-A provides a convenient alternative for solving linear PDEs on irregular domains, requiring only the matrix-vector multiplications in the multivariate confluent Arnoldi process and potentially addressing the ill-conditioned problem in the spectral collocation methods. We begin by applying MV+G-A to Poisson’s equation as an illustrative example.

\begin{example}\label{eg:PDE1}
	Consider 
	\begin{equation}\label{eq:poisson}
		\left\{\begin{array}{ll}&u(\bx)+\alpha(\bx) \Delta u(\bx)=f(\bx),~~\bx\in \Omega\subseteq\bbR^2, 
			\\&u(\bx)=h(\bx),~~\bx\in \partial\Omega,\end{array}\right.
	\end{equation} 
	where $\alpha(\bx)$ is a given function or a constant,   $f(\bx)$ is the right-hand side and $h(\bx)$ is the function for the Dirichlet  boundary condition of $u(\bx)$. As the first illustration for solving PDE, we assume $\alpha(\bx)\equiv \alpha$ is a constant. 
\end{example}

To apply MV+G-A, we   choose nodes ${\mathcal X}_0=\{\bx_j\}_{j=1}^{m_0}$ in $\Omega$ and nodes ${\mathcal X}_1=\{\bx_j\}_{j=m_0+1}^{m}$ on $\partial \Omega$ to construct a certain LS system in the form of \eqref{eq:generalmodel}. Let $m_1=m-m_0$ and $p$ given in \eqref{eq:apppoly} be the approximation of the solution  $u$. In this case, ${\bf L}=E_6P_{\alpha}$ is  the discretization matrix, where  
\begin{align} \label{eq:Palpha}
	P_{\alpha}=\diag(\underbrace{I_{m}}_{{\rm for~} p},\underbrace{{0}_{m\times m}}_{{\rm for~} \partial_{1}p},\underbrace{{0}_{m\times m}}_{{\rm for~} \partial_{2}p}, \underbrace{\alpha I_{m_0}, {0}_{m_1 \times m_1}}_{{\rm for~} \partial_{1,1}p}, \underbrace{{0}_{m\times m}}_{{\rm for~} \partial_{1,2}p},\underbrace{\alpha I_{m_0}, {0}_{m_1\times m_1}}_{{\rm for~} \partial_{2,2}p})\in \bbR^{6m\times 6m}
\end{align}
and $E_6=[I_m,\dots,I_m]\in \bbR^{m\X 6m}$. Thus  the resulting  LS system \eqref{eq:generalmodel} admits the following coefficient matrix 

\begin{align}\nonumber
	&{\bf A}\left(Q^{[0]}({\mathcal X}_0), \{Q_i^{[1]}({\mathcal X}_1),\{Q_{i,k}^{[2]}({\mathcal X}_{i,k})\}_{1\le i\le k\le 2}\right)={\bf L}Q^{(2)}\\\label{eq:APoisson}
	=& \left[\begin{array}{c} Q^{[0]}({\mathcal X}_0)\\ Q^{[0]}({\mathcal X}_1)\end{array}\right]+\left[\begin{array}{c} \alpha\left(Q_{1,1}^{[2]}({\mathcal X}_0)+Q_{2,2}^{[2]}({\mathcal X}_0)\right)\\  {0}_{m_1\times g}\end{array}\right] \in \bbR^{m\times g}
\end{align}
and   
$$
\bb=\left[\begin{array}{c}f({\mathcal X}_0) \\h({\mathcal X}_1)\end{array}\right] \in \bbR^{m}
$$
as the corresponding right-hand side. Accordingly,  the matrix $G$ is\footnote{For a general function $\alpha(\bx)$ in \eqref{eq:poisson}, we can simply replace the constant $\alpha$ with a diagonal matrix $\Upsilon=\diag(\alpha(\bx_1),\dots,\alpha(\bx_{m_0}))$ in \eqref{eq:Palpha}. This is illustrated in Example \ref{eg:PDE21}.}
\begin{equation}\label{eq:GPDE}
	G=(E_6P_{\alpha})^{\T}(E_6P_{\alpha})\in \bbR^{6m\X 6m}.
\end{equation}
For  $\by,\bz\in \bbR^{6m}$, the $G$-inner product used in Steps 9 and 16 in Algorithm \ref{alg:MCV+A-F}  is 
{\small 
	\begin{align*}
		&\langle \by,\bz\rangle_G=\by^{\T}{\bf L}^{\T}{\bf L}\bz\\
		=&\left(\by_{1:m}+  \left[\begin{array}{cc}  \alpha(\by_{3m+1:3m+m_0}+\by_{5m+1:5m+m_0})\\\mathbf{0}_{m_1}\end{array}\right]  \right)^{\T}\left(\bz_{1:m}+  \left[\begin{array}{cc}  \alpha(\bz_{3m+1:3m+m_0}+\bz_{5m+1:5m+m_0})\\\mathbf{0}_{m_1}\end{array}\right]  \right).
	\end{align*}
}

To demonstrate the result, we set the domain
\[ \Omega= \left\{ \bx=(x_1,x_2) | x_1^{2} + \frac14 x_2^{2} \leq 1\right\}   \setminus 
\left\{ (x_1,x_2) | x_1^{2} +x_2^{2}  \leq \frac14\right\},\]  
and  choose  $m_0= 504$ nodes in $\Omega$ and $ m_1= 126$ boundary nodes via {\sf distmesh} \cite{pest:2004}.
In order to demonstrate the high accuracy for the problem with a smooth solution, 
we set a test problem with exact known solution $u(\bx) = e^{x_1+\frac{x_2}{2}}$ and $\alpha = -0.1$, from which
the boundary function $h(\bx)$ and right hand side $f(\bx)$ can be computed directly from \eqref{eq:poisson}. With these conditions, we apply MV+G-A to compute polynomial approximant for the solution $u$ as well as the error $u(\bx)-p(\bx)$ at ${\mathcal X}$. 
Figure \ref{fig:ex4_3_1} gives results with $n = 22$ and $g = 276.$ We observe that the numerical solution $p$ provides highly accurate approximation for $u$, and moreover, it is computationally efficient and flexible. These features suggest the efficiency of  MV+G-A in solving \eqref{eq:poisson}.

\begin{figure}
	\centering
	\hskip -5mm
	\epsfig{file = 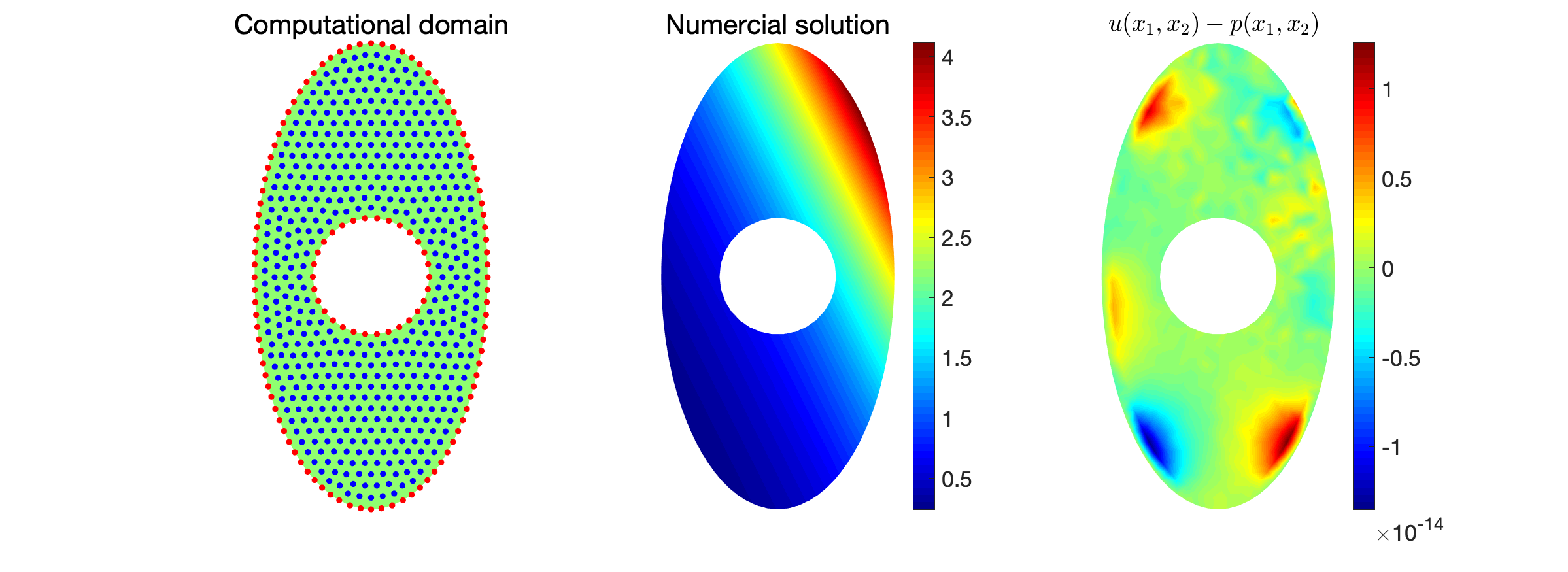,width = 14cm}
	\caption{ (left) fitting nodes;  (middle) numerical solution $p(\bx)$;  (right) error of $p(\bx)$.} 
	\label{fig:ex4_3_1}
\end{figure}

\begin{example}\label{eg:PDE21}
	To extend Example \ref{eg:PDE1}, we modify the constant $\alpha$ in \eqref{eq:poisson} as $\alpha(\bx)=-e^{-\|\bx\|_2^{2}}$ and consider again the solution for the Poisson equation \eqref{eq:poisson} with the same domain $\Omega$. Particularly, we set $h(\bx) \equiv 0$ on the boundary and $f(\bx) \equiv 1$ as the right hand side function.    The domain $\Omega$ is discretized  by {\sf distmesh} \cite{pest:2004} with $m_{0} = 3650$ and $m_{1} = 331$, and set  $n = 40 ~(g = 861)$ to generate a polynomial $p$ for the solution $u$. Based on the $G$-inner product specified in \eqref{eq:GPDE} by simply replacing the constant $\alpha$ with a diagonal matrix $\Upsilon=\diag(\alpha(\bx_1),\dots,\alpha(\bx_{m_0})),$ we arrive at an orthonormal coefficient matrix, analogous to \eqref{eq:APoisson}, for the LS problem \eqref{eq:generalmodel}. 
	To reflect the accuracy, because  we do not have the closed-form solution $u$, we compute
	the residual ${\mathcal  L} p - f$ as a measure of the accuracy, where ${\mathcal  L} u = u + \alpha \Delta u$. 
	Figure \ref{fig:ex4_3_2} plots the fitting nodes (left subfigure) in the domain $\Omega$,  the numerical solution (middle subfigure), and the residual (right subfigure)  at ${\mathcal X}$.  
	\begin{figure}[t!!!]
		\centering
		\hskip -5mm
		\epsfig{file = 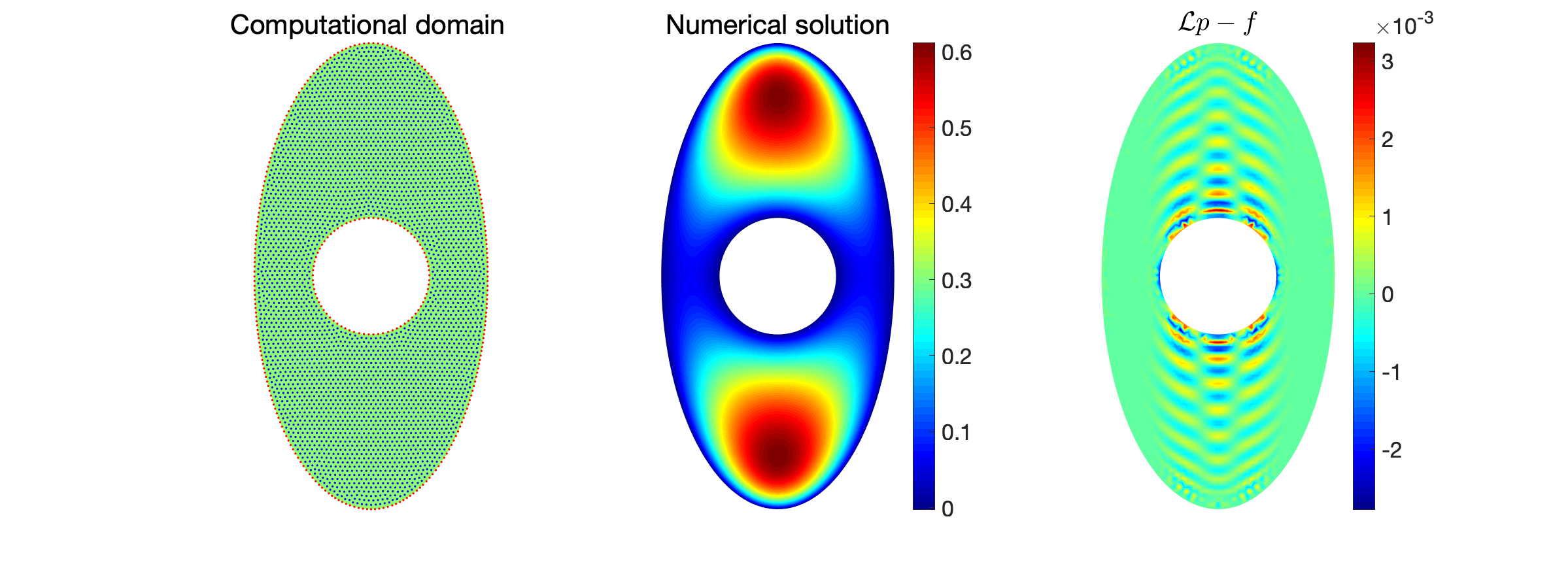,width = 14cm}
		\caption{ (left)  fitting nodes;   (middle) numerical solution $p(\bx)$;  (right)  residual ${\mathcal L}p - f$.} 
		\label{fig:ex4_3_2}
	\end{figure}
\end{example}

\begin{example}\label{eg:PDE3}
	As our last application for  demonstrating the flexibility of our method,  we consider  the Poisson equation with both the Dirichlet and Neumann conditions:
	\begin{equation}\label{eq:poissonMixBC}
		\left\{\begin{array}{ll}&u(\bx)+\alpha(\bx) \Delta u(\bx)=f(\bx),~~\bx\in \Omega\subseteq\bbR^2, 
			\\&u(\bx)=h_1(\bx),~~\bx\in \Gamma_1, \\
			& \nabla u\cdot  {\bn} = h_2(\bx),~~\bx\in \Gamma_2,
		\end{array} \right.
	\end{equation} 
	where $\partial\Omega = \Gamma_1\cup \Gamma_2$, $\bn=[n_1(\bx),n_2(\bx)]^{\T}$ denotes the outward normal unit vector to $\Gamma_2$ at $\bx\in \Gamma_2$, and $\nabla u(\bx)=[\partial_1 u(\bx),\partial_2 u(\bx)]^{\T}$ is the gradient of $u$ at $\bx\in \Gamma_2$. For testing purposes, we construct the right-hand side functions $f(\bx)$, $h_1(\bx)$ and $h_2(\bx)$, by selecting $\alpha(\bx)=-e^{-\|\bx\|_2^{2}}$ and taking $u(\bx) = \sin(x_1x_2)$ as the exact solution of \eqref{eq:poissonMixBC}. 
\end{example}

With the same computational  domain and mesh points  as Example \ref{eg:PDE1},  Figure \ref{fig:ex4_5} (left) plots the interior nodes (blue dots), nodes for the Dirichlet  boundary $\Gamma_1$ (black dots) and the Neumann boundary $\Gamma_2$ (red dots) conditions; in particular, 
\begin{align*}
	&{\mathcal X}_0:=\{\bx_j\}_{j=1}^{m_0}\subset \Omega,~m_0=504,\\
	&\what {\mathcal X}_1:=\{\bx_j\}_{j=m_0+1}^{m_0+\what  m_1}\subset \Gamma_1, ~\what m_{1}=30,\\
	&\wtd {\mathcal X}_1:=\{\bx_j\}_{j=m_0+\what m_1+1}^{m_0+\what m_1+\wtd m_1}\subset \Gamma_2, ~\wtd m_1=96
\end{align*}
are the fitting nodes. To treat the  Neumann condition $\nabla u(\bx)\cdot  {\bn}(\bx) = h_2(\bx)$, for any node $\bx_j\in \wtd {\mathcal X}_1$, we have
$$
\nabla u(\bx_j)\cdot  {\bn}(\bx_j) =n_1(\bx_j) \partial_1 u(\bx_j)+ n_2(\bx_j) \partial_2 u(\bx_j)=h_2(\bx_j).
$$
With ${\mathcal X}_1=\what {\mathcal X}_1\cup \wtd {\mathcal X}_1$,  ${\mathcal X}={\mathcal X}_1\cup {\mathcal X}_0$, $m_1=\what m_1+\wtd m_1=126$ and $m=m_0+\what m_1+\wtd m_1=630$,  this leads to
$$
\left(N_1 Q_1^{[1]}(\wtd{\mathcal X}_1)+N_2Q_2^{[1]}(\wtd{\mathcal X}_1)\right)\bc =h_2(\wtd {\mathcal X}_1),
$$
where 
$
N_i = {\diag}(n_i(\bx_{m_0+\what m_1+1}),\dots, n_i(\bx_{m})) \in \bbR^{\wtd m_1\times \wtd m_1}$ for $i=1,2.$

For the discretization matrix ${\bf L}$, it holds that
$
{\bf L}=E_6P_{\Upsilon},
$ where, with $0_m:=0_{m\times m}$,
\begin{align}\nonumber
	P_{\Upsilon}=\diag(\underbrace{I_{m-\wtd m_1},  {0}_{\wtd m_1}}_{{\rm for~} p},\underbrace{{0}_{m-\wtd m_1},N_1}_{{\rm for~} \partial_{1}p},\underbrace{ {0}_{m-\wtd m_1}, N_2}_{{\rm for~} \partial_{2}p}, \underbrace{\Upsilon, {0}_{m-  m_0}}_{{\rm for~} \partial_{1,1}p}, \underbrace{{0}_{m}}_{{\rm for~} \partial_{1,2}p},\underbrace{\Upsilon, {0}_{m-  m_0}}_{{\rm for~} \partial_{2,2}p})\in \bbR^{6m\times 6m},
\end{align}
$\Upsilon=\diag(\alpha(\bx_1),\dots,\alpha(\bx_{m_0}))\in \bbR^{m_0\times m_0}$ and $E_6=[I_m,\dots,I_m]\in \bbR^{m\X 6m}$. Thus  $G=(E_6P_{\Upsilon})^{\T}(E_6P_{\Upsilon})\in \bbR^{6m\X 6m}$. Also, for the LS system \eqref{eq:generalmodel}, we have
\begin{equation}\label{eq:APoisson2} 
	{\bf A}={\bf L}Q^{(2)}= \left[\begin{array}{c} Q^{[0]}({\mathcal X}_0)\\Q^{[0]}(\what{\mathcal X}_1)\\ N_1 Q_1^{[1]}(\wtd{\mathcal X}_1)+N_2Q_2^{[1]}(\wtd{\mathcal X}_1) \end{array}\right]+\left[\begin{array}{c} \Upsilon\left(Q_{1,1}^{[2]}({\mathcal X}_0)+Q_{2,2}^{[2]}({\mathcal X}_0)\right)\\  {0}_{\what m_1\times g}\\  {0}_{\wtd m_1\times g}\end{array}\right] \in \bbR^{m\times g}
\end{equation}
and  
$$
\bb=\left[\begin{array}{c}f({\mathcal X}_0) \\h_1(\what {\mathcal X}_1)\\h_2(\wtd {\mathcal X}_1)\end{array}\right] \in \bbR^{m}.
$$

Applying MV+G-A with these   settings, we plot the numerical solution $p(\bx)\in \bbP_{22}^{2,{\rm tol}}$ ($n=22,g=276$) of $u(\bx)$ and the error $u(\bx)-p(\bx)$ at ${\mathcal X}$, which demonstrates the high accuracy of the approximation solution of $u(\bx)$.
\begin{figure}[h!!!]
	\centering
	\hskip -5mm
	\epsfig{file = 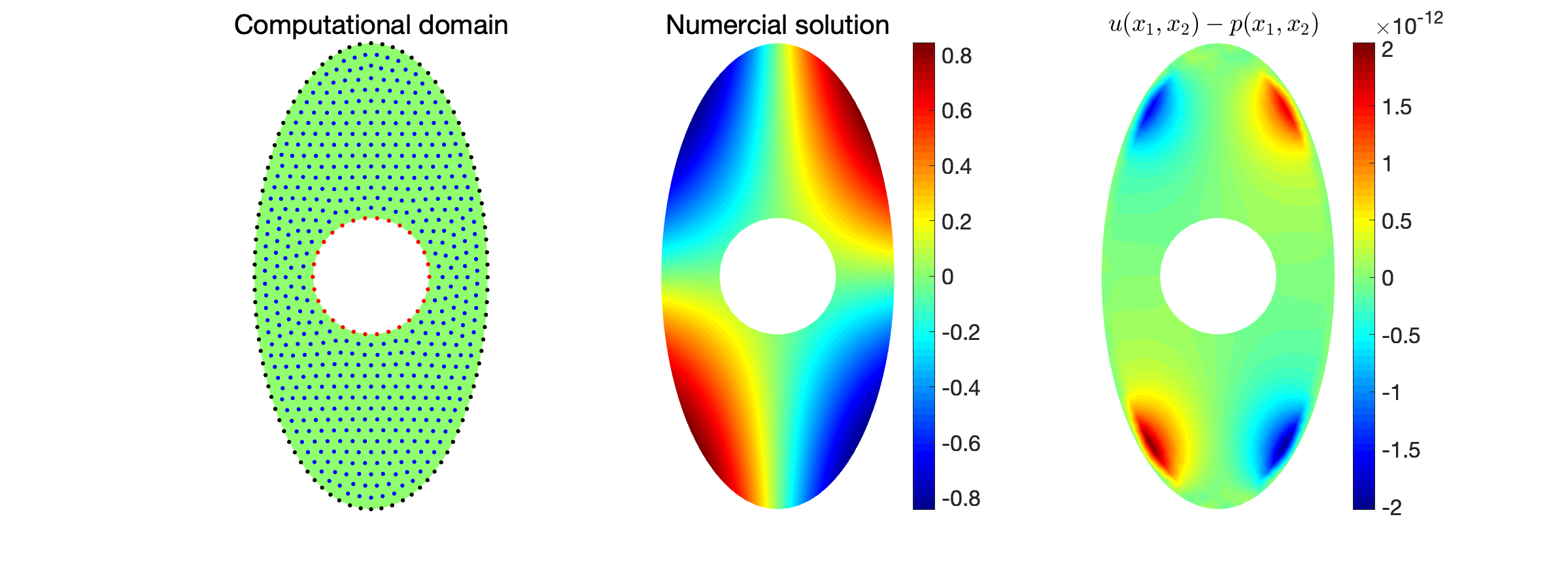,width = 14cm}
	\caption{(left)  fitting nodes;   (middle) numerical solution $p(\bx)$;  (right)  error $u(\bx)-p(\bx)$.} 
	\label{fig:ex4_5}
\end{figure}

\section{Conclusions}\label{sec_conclusion}
In this paper, we have extended the univariate confluent Vandermonde with Arnoldi \cite{nizz:2023} to the multivariate case. Besides the technical treatments of this extension, we also introduced a general symmetric and positive semi-definite $G$-inner product to the Gram-Schmidt process within the Arnoldi process \cite{lirc:2004rept}. The resulting procedure is termed as MV+G-A where the associated symmetric and positive semi-definite matrix $G$ can be specified by a particular application, which yields a well-conditioned (orthonormal) coefficient matrix for the underlying least-squares problem. The approximation multivariate polynomial, represented in a discrete $G$-orthogonal polynomial basis, then can be accurately computed, and the recurrence in the discrete $G$-orthogonal polynomial basis facilitates the evaluation of the polynomial at new nodes. The use of MV+G-A to the multivariate Hermite least-squares problem and Poisson equations with various boundary conditions demonstrates its applicability and flexibility.

\section*{Acknowledgements}
The authors would like to thank Professor Ren-Cang Li at University of Texas at Arlington for mentioning the $G$-Lanczos process and providing the reference \cite{lirc:2004rept} with us. They also thank Chungen Shen for useful discussion. 
\def\noopsort#1{}\def\l{\char32l}\def\v#1{{\accent20 #1}}
  \let\^^_=\v\def\hbk{hardback}\def\pbk{paperback}
\providecommand{\href}[2]{#2}
\providecommand{\arxiv}[1]{\href{http://arxiv.org/abs/#1}{arXiv:#1}}
\providecommand{\url}[1]{\texttt{#1}}
\providecommand{\urlprefix}{URL }

%
%
%
\end{document}